\numberwithin{equation}{section}
\definecolor{webgreen}{rgb}{0,.5,0}
\definecolor{webbrown}{rgb}{.6,0,0}
\newcommand{\Z}{{\mathbb Z}}
\newtheorem{thm}{Theorem}
\newtheorem{theorem}[thm]{Theorem}
\newtheorem{lemma}{Lemma}
\title{Fibonacci and Lucas Identities the Golden Way}
\author[]{Kunle Adegoke \\\href{mailto:adegoke00@gmail.com}{\tt adegoke00@gmail.com}}
\affil{Department of Physics and Engineering Physics, \mbox{Obafemi Awolowo University}, 220005 Ile-Ife, Nigeria}
\begin{document}
\date{}

\maketitle

\begin{abstract}
\noindent By expressing Fibonacci and Lucas numbers in terms of the powers of the golden ratio, $\alpha=(1+\sqrt 5)/2$ and its inverse, $\beta=-1/\alpha=(1-\sqrt 5)/2$, a multitude of Fibonacci and Lucas identities have been developed in the literature. In this paper, we follow the reverse course: we derive numerous Fibonacci and Lucas identities by making use of the well-known expressions for the powers of $\alpha$ and $\beta$ in terms of Fibonacci and Lucas numbers.

\end{abstract}
\section{Introduction}
The Fibonacci numbers, $F_n$, and the Lucas numbers, $L_n$, are defined, for \mbox{$n\in\Z$}, through the recurrence relations 
\begin{equation}\label{eq.s6z1bcx}
F_n=F_{n-1}+F_{n-2}, \mbox{($n\ge 2$)},\quad\mbox{$F_0=0$, $F_1=1$};
\end{equation}
and
\begin{equation}
L_n=L_{n-1}+L_{n-2}, \mbox{($n\ge 2$)},\quad\mbox{$L_0=2$, $L_1=1$};
\end{equation}
with
\begin{equation}
F_{-n}=(-1)^{n-1}F_n\,,\quad L_{-n}=(-1)^nL_n\,.
\end{equation}
Throughout this paper, we denote the golden ratio, $(1+\sqrt 5)/2$, by $\alpha$ and write $\beta=(1-\sqrt 5)/2=-1/\alpha$, so that $\alpha\beta=-1$ and $\alpha+\beta=1$. The following well-known algebraic properties of $\alpha$ and $\beta$ can be proved directly from Binet's formula for the $n$th Fibonacci number or by induction:
\begin{equation}\label{eq.y9hcktl}
\alpha^n=\alpha^{n-1}+\alpha^{n-2}\,,
\end{equation}
\begin{equation}\label{eq.wa1x3n6}
\beta^n=\beta^{n-1}+\beta^{n-2}\,,
\end{equation}
\begin{equation}\label{eq.hxtle1n}
\alpha ^n  = \alpha F_n  + F_{n - 1}\,, 
\end{equation}
\begin{equation}\label{eq.oos3hdl}
\alpha ^n \sqrt 5  = \alpha ^n (\alpha - \beta) = \alpha L_n  + L_{n - 1}\,, 
\end{equation}
\begin{equation}\label{eq.gt6xmho}
\beta ^n  = \beta F_n  + F_{n - 1}\,,
\end{equation}
\begin{equation}\label{eq.nag65l1}
\beta ^n\sqrt  5  = \beta ^n (\alpha - \beta) =  -\beta L_n  - L_{n - 1}\,,
\end{equation}
\begin{equation}\label{eq.eu0zd8d}
\beta ^n  =  - \alpha F_n  + F_{n + 1}\,,
\end{equation}
\begin{equation}\label{eq.eadpgp1}
\beta ^n \sqrt 5  =   \alpha L_n  - L_{n + 1}\,,
\end{equation}
\begin{equation}\label{eq.huo1cwh}
\alpha^{-n}=(-1)^{n-1}\alpha F_n+(-1)^nF_{n+1}
\end{equation}
and
\begin{equation}\label{eq.rl4090d}
\beta^{-n}=(-1)^{n}\alpha F_n+(-1)^nF_{n-1}\,.
\end{equation}
Hoggatt et.~al.~\cite{hoggatt71} derived, among other results, the identity
\[
F_{k+t}=\alpha^kF_t+\beta^tF_k\,,
\]
which, upon multiplication by $\alpha^s$, can be put in the form
\begin{equation}\label{eq.snj8qge}
\alpha^sF_{k+t}=\alpha^{s+k}F_t+(-1)^t\alpha^{s-t}F_k\,.
\end{equation}
Identity \eqref{eq.snj8qge} is unchanged under interchange of $k$ and $t$, $k$ and $s$ and interchange of $t$ and $-s$ and $s$ and $-t$; we therefore have three additional identities:
\begin{equation}\label{eq.nfsaicr}
\alpha^sF_{k+t}=\alpha^{s+t}F_k+(-1)^k\alpha^{s-k}F_t\,,
\end{equation}
\begin{equation}\label{eq.mnoup5u}
\alpha^kF_{s+t}=\alpha^{s+k}F_t+(-1)^t\alpha^{k-t}F_s
\end{equation}
and
\begin{equation}\label{eq.mnoup5u}
\alpha^{s-t}F_k=\alpha^{k-t}F_s+(-1)^s\alpha^{-t}F_{k-s}\,.
\end{equation}
As Koshy \cite[p.79]{koshy} noted, the two Binet formulas
\begin{equation}
F_n=\frac{\alpha^n-\beta^n}{\alpha-\beta},\quad L_n=\alpha^n+\beta^n\,,
\end{equation}
expressing $F_n$ and $L_n$ in terms of $\alpha^n$ and $\beta^n$, can be used in tandem to derive an array of identities. 

\medskip

Our aim in writing this paper is to derive numerous Fibonacci and Lucas identities by emphasizing identities \eqref{eq.hxtle1n} -- \eqref{eq.eadpgp1}, expressing $\alpha^n$ and $\beta^n$ in terms of $F_n$ and $L_n$. Our method relies on the fact that $\alpha$ and $\beta$ are irrational numbers. We will make frequent use of the fact that if $a$, $b$, $c$ and $d$ are rational numbers and $\gamma$ is an irrational number, then $a\gamma+b=c\gamma+d$ implies that $a=c$ and $b=d$; an observation that was used to advantage by Griffiths \cite{griffiths}.

\medskip

As a quick illustration of our method, take $x=\alpha F_p$ and $y=F_{p-1}$ in the binomial identity
\[
\sum_{j=0}^n{\binom njx^jy^{n-j}}=(x+y)^n\,,
\]
to obtain
\begin{equation}
\sum_{j = 0}^n {\binom nj\alpha ^j F_p^j F_{p - 1}^{n - j} }  = \alpha ^{np}\,,
\end{equation}
which, by multiplying both sides by $\alpha^q$, can be written
\begin{equation}\label{eq.n68cvcs}
\sum_{j = 0}^n {\binom nj\alpha ^{j+q} F_p^j F_{p - 1}^{n - j} }  = \alpha ^{np+q}\,,
\end{equation}
which, by identity \eqref{eq.hxtle1n}, evaluates to
\begin{equation}\label{eq.a1g1vk0}
\alpha \sum_{j = 0}^n {\binom njF_p^j F_{p - 1}^{n - j} F_{j + q} }  + \sum_{j = 0}^n {\binom njF_p^j F_{p - 1}^{n - j} F_{j + q - 1} }  = \alpha F_{np + q}  + F_{np + q - 1}\,.
\end{equation}
Comparing the coefficients of $\alpha$ in \eqref{eq.a1g1vk0}, we find
\begin{equation}\label{eq.c5vjr47}
\sum_{j = 0}^n {\binom njF_p^j F_{p - 1}^{n - j} F_{j + q} } = F_{np + q}\,;
\end{equation}
valid for non-negative integer $n$ and arbitrary integers $p$ and $q$.

\medskip

Identity \eqref{eq.c5vjr47} contains many known identities as special cases.

\medskip

If we write \eqref{eq.n68cvcs} as
\begin{equation}\label{eq.f807w1c}
\sum_{j = 0}^n {\binom nj\alpha ^{j+q} \sqrt 5F_p^j F_{p - 1}^{n - j} }  = \alpha ^{np+q}\sqrt 5
\end{equation}
and apply identity \eqref{eq.oos3hdl}, we obtain the Lucas version of \eqref{eq.c5vjr47}, namely,
\begin{equation}\label{eq.il7kcyt}
\sum_{j = 0}^n {\binom njF_p^j F_{p - 1}^{n - j} L_{j + q} } = L_{np + q}\,.
\end{equation}

\begin{lemma}
The following properties hold for $a$, $b$, $c$ and $d$ rational numbers:
\makeatletter
\renewcommand\tagform@[1]{\maketag@@@{\ignorespaces#1\unskip\@@italiccorr}}
\[\tag{P1}
a\alpha+b=c\alpha+d\iff a=c,\;b=d\,,
\]
\[\tag{P2}
\quad a\beta+b=c\beta+d\iff a=c,\;b=d\,,
\]
\[\tag{P3}
\quad\frac{1}{{c\alpha  + d}} = \left( {\frac{c}{{c^2  - d^2  - cd}}} \right)\alpha  - \left(\frac{{c + d}}{{c^2  - d^2  - cd}}\right)\,,
\]
\[\tag{P4}
\frac{1}{{c\beta  + d}} = \left( {\frac{c}{{c^2  - d^2  - cd}}} \right)\beta  - \left(\frac{{c + d}}{{c^2  - d^2  - cd}}\right)\,,
\]
\[\tag{P5}
\frac{{a\alpha  + b}}{{c\alpha  + d}} = \frac{{cb - da}}{{c^2  - d^2  - cd}}\alpha  + \frac{{ca - db - cb}}{{c^2  - cd - d^2 }}
\]
and
\[\tag{P6}
\frac{{a\beta  + b}}{{c\beta  + d}} = \frac{{cb - da}}{{c^2  - d^2  - cd}}\beta  + \frac{{ca - db - cb}}{{c^2  - cd - d^2 }}\,.
\]
\makeatother

\end{lemma}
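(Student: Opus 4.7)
The plan is to handle P1--P2 by the irrationality of $\alpha$ and $\beta$, and then to reduce P3--P6 to routine rationalisation using the defining relations $\alpha^2=\alpha+1$ and $\beta^2=\beta+1$.

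For P1, starting from $a\alpha+b=c\alpha+d$ with $a,b,c,d\in\Q$, I rearrange to $(a-c)\alpha=d-b$. If $a\neq c$, dividing gives $\alpha=(d-b)/(a-c)\in\Q$, contradicting the irrationality of $\alpha=(1+\sqrt5)/2$. Hence $a=c$, and then immediately $b=d$. The converse is trivial. P2 is identical with $\beta=(1-\sqrt5)/2$ in place of $\alpha$, since $\beta$ is also irrational.

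For P3, the key move is to rationalise $1/(c\alpha+d)$ by multiplying numerator and denominator by $c\alpha-(c+d)$. Using $\alpha^2=\alpha+1$, I expand
\[
(c\alpha+d)\bigl(c\alpha-(c+d)\bigr)=c^2\alpha^2-c(c+d)\alpha+cd\alpha-d(c+d)=c^2-cd-d^2,
\]
where the $\alpha$-coefficient cancels: $c^2-c(c+d)+cd=0$. Dividing then gives exactly the stated form of P3. P4 is proved identically, since $\beta$ satisfies the same quadratic $\beta^2=\beta+1$ and the computation depends only on this relation.

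For P5, I write $(a\alpha+b)/(c\alpha+d)=(a\alpha+b)\cdot 1/(c\alpha+d)$, substitute P3, and expand using $\alpha^2=\alpha+1$:
\[
(a\alpha+b)\bigl(c\alpha-(c+d)\bigr)=(bc-ad)\alpha+(ac-bc-bd),
\]
after collecting $\alpha$-terms. Dividing by $c^2-cd-d^2$ yields P5. P6 follows by the same expansion with $\beta$ in place of $\alpha$. There is no real obstacle here: the only thing to be careful about is the sign bookkeeping when expanding $(a\alpha+b)(c\alpha-(c+d))$ and verifying that the denominator $c^2-cd-d^2$ (equivalently $c^2-d^2-cd$) is indeed what falls out after applying $\alpha^2=\alpha+1$.
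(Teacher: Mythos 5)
Your proof is correct, and all the algebra checks out: the $\alpha$-coefficient in $(c\alpha+d)\bigl(c\alpha-(c+d)\bigr)$ does vanish, the constant term is $c^2-cd-d^2$, and the expansion $(a\alpha+b)\bigl(c\alpha-(c+d)\bigr)=(bc-ad)\alpha+(ac-bc-bd)$ reproduces P5 and P6 exactly; P1 and P2 via irrationality of $\alpha$ and $\beta$ is also the paper's stated basis. The one difference in route is for P3--P6: the paper simply remarks that these ``follow from properties P1 and P2,'' i.e.\ the intended derivation is to posit $\frac{1}{c\alpha+d}=x\alpha+y$ with $x,y$ rational, cross-multiply using $\alpha^2=\alpha+1$, and equate coefficients by P1 (similarly with $\beta$ and P2), whereas you bypass P1/P2 entirely and verify the identities directly by rationalising with the factor $c\alpha-(c+d)$. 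Your version is a self-contained verification that only uses the quadratic relation satisfied by $\alpha$ and $\beta$; the paper's version explains how the coefficients would be discovered and leans on the uniqueness statement. Both are legitimate, and the computations are the same underneath. The only point neither you nor the paper makes explicit is that the denominator $c^2-cd-d^2$ is nonzero whenever $(c,d)\neq(0,0)$ with $c,d$ rational (otherwise $c/d$ would equal $\alpha$ or $\beta$), which is worth a sentence if you want the rationalisation to be fully watertight.
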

Properties P3 to P6 follow from properties P1 and P2.

\medskip

The rest of this section is devoted to using our method to re-discover known results or to discover results that may be easily deduced from known ones. In establishing some of the identities we require the fundamental relations $F_{2n}=F_nL_n$, $L_n=F_{n-1}+F_{n+1}$ and $5F_n=L_{n-1}+L_{n+1}$. Presumably new results will be developed in section \ref{sec.auf9q1i}.
\subsubsection*{Fibonacci and Lucas addition formulas}
To derive the Fibonacci addition formula, use \eqref{eq.hxtle1n} to write the identity
\begin{equation}
\alpha^{p+q}=\alpha^p\alpha^q
\end{equation}
as
\[
\alpha F_{p + q}  + F_{p + q - 1}=(\alpha F_p  + F_{p - 1} )(\alpha F_q  + F_{q - 1} ) \,,
\]
from which, by multiplying out the right side, making use of \eqref{eq.hxtle1n} again, we find
\begin{equation}\label{eq.crrliq4}
\begin{split}
\alpha F_{p + q}  + F_{p + q - 1}&= \alpha(F_p F_q  + F_p F_{q - 1}  + F_{p - 1} F_q ) + F_p F_q  + F_{p - 1} F_{q - 1}\\
&=\alpha{(F_p F_{q + 1}  + F_{p - 1} F_q)} + F_p F_q  + F_{p - 1} F_{q - 1}\,.
\end{split}
\end{equation}
Equating coefficients of $\alpha$ (property P1) from both sides of \eqref{eq.crrliq4} establishes the well-known Fibonacci addition formula:
\begin{equation}\label{eq.uhyuo34}
F_{p + q}=F_p F_{q + 1}  + F_{p - 1} F_q\,.
\end{equation}
A similar calculation using the identity 
\begin{equation}
\alpha^p\beta^q=(-1)^q\alpha^{p-q}
\end{equation}
produces the subtraction formula
\begin{equation}\label{eq.zckqqm5}
(-1)^qF_{p-q}=F_pF_{q-1}-F_{p-1}F_q\,,
\end{equation}
which may, of course, be obtained from \eqref{eq.uhyuo34} by changing $q$ to $-q$.

\medskip

The Lucas counterpart of identity \eqref{eq.uhyuo34} is obtained by applying \eqref{eq.hxtle1n} and \eqref{eq.oos3hdl} to the identity
\begin{equation}
\alpha^{p+q}\sqrt 5=(\alpha^p\sqrt 5\,)\alpha^q\,,
\end{equation}
and proceeding as in the Fibonacci case, giving
\begin{equation}\label{eq.rr17kpl}
L_{p + q}=F_p L_{q + 1}  + F_{p - 1} L_q \,.
\end{equation}
Application of \eqref{eq.hxtle1n} to the right side and \eqref{eq.oos3hdl} to the left side of the identity
\begin{equation}
5\alpha ^{p + q}=(\alpha ^p \sqrt 5 \,)(\alpha ^q \sqrt 5 \,)
\end{equation}
produces
\begin{equation}
5F_{p + q}=L_p L_{q + 1}  + L_{p - 1} L_q\,.
\end{equation}
\subsubsection*{Fibonacci and Lucas multiplication formulas}
Subtracting identity \eqref{eq.snj8qge} from identity \eqref{eq.nfsaicr} gives
\begin{equation}\label{eq.rw1nse4}
F_t \left( {\alpha ^{s + k}  - ( - 1)^k \alpha ^{s - k} } \right) = F_k \left( {\alpha ^{s + t}  - ( - 1)^t \alpha ^{s - t} } \right)\,.
\end{equation}
Applying identity \eqref{eq.hxtle1n} to identity \eqref{eq.rw1nse4} and equating coefficients of $\alpha$, we obtain
\begin{equation}\label{eq.qvcfdjb}
F_t \left( {F_{s + k}  - ( - 1)^k F_{s - k} } \right) = F_k \left( {F_{s + t}  - ( - 1)^t F_{s - t} } \right)\,,
\end{equation}
which, upon setting $k=1$, gives
\begin{equation}
F_t L_s = F_{s + t}  - ( - 1)^t F_{s - t} \,.
\end{equation}
Writing identity \eqref{eq.rw1nse4} as
\begin{equation}\label{eq.waltf4g}
F_t \left( {\alpha ^{s + k}\sqrt 5  - ( - 1)^k \alpha ^{s - k}\sqrt 5 } \right) = F_k \left( {\alpha ^{s + t}\sqrt 5  - ( - 1)^t \alpha ^{s - t}\sqrt 5 } \right)\,,
\end{equation}
applying identity \eqref{eq.oos3hdl} and equating coefficients of $\alpha$ produces
\begin{equation}\label{eq.qrx0rof}
F_t \left( {L_{s + k}  - ( - 1)^k L_{s - k} } \right) = F_k \left( {L_{s + t}  - ( - 1)^t L_{s - t} } \right)\,,
\end{equation}
which, upon setting $k=1$, gives
\begin{equation}\label{eq.es6bhhx}
5F_t F_s = L_{s + t}  - ( - 1)^t L_{s - t} \,.
\end{equation}
Adding identity \eqref{eq.snj8qge} and identity \eqref{eq.nfsaicr}, making use of identity \eqref{eq.hxtle1n} and equating the coefficients of $\alpha$, we obtain
\begin{equation}
2F_{k + t} F_s  = F_t \left( {F_{s + k}  + ( - 1)^k F_{s - k} } \right) + F_k \left( {F_{s + t}  + ( - 1)^t F_{s - t} } \right)\,,
\end{equation}
which, at $k=t$ reduces to
\begin{equation}\label{eq.jfoym63}
L_t F_s = F_{s + t}  + ( - 1)^t F_{s - t} \,.
\end{equation}
Similarly, adding identity \eqref{eq.snj8qge} and identity \eqref{eq.nfsaicr}, multiplying through by $\sqrt 5$, making use of identity \eqref{eq.oos3hdl} and equating the coefficients of $\alpha$, we have
\begin{equation}
2F_{k + t} L_s  = F_t \left( {L_{s + k}  + ( - 1)^k L_{s - k} } \right) + F_k \left( {L_{s + t}  + ( - 1)^t L_{s - t} } \right)\,,
\end{equation}
which, at $k=t$ reduces to
\begin{equation}\label{eq.jfoym63}
L_t L_s = L_{s + t}  + ( - 1)^t L_{s - t} \,.
\end{equation}

\subsubsection*{Cassini's identity}
Since 
\begin{equation}\label{eq.ruemioz}
\alpha^n\beta^n=(\alpha\beta)^n=(-1)^n\,;
\end{equation}
applying identities \eqref{eq.hxtle1n} and \eqref{eq.eu0zd8d} to the left hand side of the above identity gives
\[
\begin{split}
\alpha ^n \beta ^n & = (\alpha F_n  + F_{n - 1} )( - \alpha F_n  + F_{n + 1} )\\
& =  - \alpha ^2 F_n^2  + \alpha (F_n F_{n + 1}  - F_n F_{n - 1} ) + F_{n - 1} F_{n + 1}\\
&= \alpha ( - F_n^2  + F_n^2 ) - F_n^2  + F_{n - 1} F_{n + 1}\,.
\end{split}
\]
Thus, according to \eqref{eq.ruemioz}, we have
\[
\alpha ( - F_n^2  + F_n^2 ) - F_n^2  + F_{n - 1} F_{n + 1}=(-1)^n\,.
\]
Comparing coefficients of $\alpha^0$ from both sides gives Cassini's identity:
\begin{equation}\label{eq.wk4gkn2}
F_{n-1}F_{n+1}=F_n^2+(-1)^n\,.
\end{equation}
To derive the Lucas version of \eqref{eq.wk4gkn2}, write
\begin{equation}
(\alpha^n\sqrt 5\,)(\beta^n\sqrt 5\,)=(-1)^n5\,;
\end{equation}
apply \eqref{eq.oos3hdl} and \eqref{eq.eadpgp1} to the left hand side, multiply out and equate coefficients, obtaining
\begin{equation}
L_{n-1}L_{n+1}-L^2_n=(-1)^{n-1}5\,.
\end{equation}
\subsubsection*{General Fibonacci and Lucas addition formulas and Catalan's identity}
From identity \eqref{eq.snj8qge}, we can derive an addition formula that includes identity \eqref{eq.uhyuo34} as a particular case.

\medskip

Using identity \eqref{eq.hxtle1n} to write the left hand side (lhs) and the right hand side (rhs) of identity \eqref{eq.snj8qge}, we have
\begin{equation}\label{eq.fg1qrsg}
\text{lhs of \eqref{eq.snj8qge}} = \alpha F_s F_{k + t}  + F_{s - 1} F_{k + t}
\end{equation}
and
\begin{equation}\label{eq.af1fj9i}
\begin{split}
\text{rhs of \eqref{eq.snj8qge}}&= \alpha F_{s + k} F_t  + F_{s + k - 1} F_t  + \alpha ( - 1)^t F_{s - t} F_k  + ( - 1)^t F_{s - t - 1} F_k\\
&= \alpha (F_{s + k} F_t  + ( - 1)^t F_{s - t} F_k ) + (F_{s + k - 1} F_t  + ( - 1)^t F_{s - t - 1} F_k )\,.
\end{split}
\end{equation}
Comparing the coefficients of $\alpha$ from \eqref{eq.fg1qrsg} and \eqref{eq.af1fj9i}, we find
\begin{equation}\label{eq.wo5odhd}
F_s F_{k + t}  = F_{s + k} F_t  + ( - 1)^t F_{s - t} F_k\,,
\end{equation}
of which identity \eqref{eq.uhyuo34} is a particular case.

\medskip

Setting $t=s-k$ in \eqref{eq.wo5odhd} produces Catalan's identity:
\begin{equation}
F_s^2  = F_{s + k} F_{s - k}  + ( - 1)^{s + k} F_k^2\,.
\end{equation}
Multiplying through identity \eqref{eq.snj8qge} by $\sqrt 5$ and performing similar calculations to above produces
\begin{equation}\label{eq.tzz9m9p}
L_s F_{k + t}  = L_{s + k} F_t  + ( - 1)^t L_{s - t} F_k\,,
\end{equation}
which at $t=s-k$ gives
\begin{equation}
F_{2s}=L_{s+k}F_{s-k}+(-1)^{s+k}F_{2k}\,.
\end{equation}
\subsubsection*{Sums of Fibonacci and Lucas numbers with subscripts in arithmetic progression}
Setting $x=\alpha^p$ in the geometric sum identity
\begin{equation}\label{eq.cmyl19p}
\sum_{j=0}^nx^j=\frac{1-x^{n+1}}{1-x}
\end{equation}
and multiplying through by $\alpha^q$ gives
\begin{equation}\label{eq.fqua2xh}
\sum_{j=0}^n\alpha^{pj+q}=\frac{\alpha^q-\alpha^{pn+p+q}}{1-\alpha^p}\,.
\end{equation}
Thus, we have
\begin{equation}
\sum_{j=0}^n\alpha^{pj+q}= {\frac{{\alpha (F_{pn + p + q}  - F_q ) + (F_{pn + p + q - 1}  - F_{q - 1} )}}{{\alpha F_p  + (F_{p - 1}  - 1)}}}\,,
\end{equation}
from which, with the use of identity \eqref{eq.hxtle1n} and property P5, we find
\begin{equation}
\sum_{j = 0}^n {F_{pj + q} }  = \frac{F_p (F_{pn + p + q - 1}  - F_{q - 1} )-(F_{p - 1}  - 1)(F_{pn + p + q} - F_q)}{{L_p  - 1 + ( - 1)^{p - 1} }}\,,
\end{equation}
valid for all integers $p$, $q$ and $n$. The derivation here is considerably simpler than in the direct use of Binet's formula as done, for example, in Koshy \cite[p. 86]{koshy} and by Freitag \cite{freitag}.

\medskip

Multiplying through identity \eqref{eq.fqua2xh} by $\sqrt 5$ gives
\begin{equation}
\sum_{j=0}^n\alpha^{pj+q}\sqrt 5=\frac{\alpha^q\sqrt 5-\alpha^{pn+p+q}\sqrt 5}{1-\alpha^p}\,,
\end{equation}
from which, by identities \eqref{eq.oos3hdl} and \eqref{eq.hxtle1n}, and properties P5 and P1, we find
\begin{equation}
\sum_{j = 0}^n {L_{pj + q} }  = \frac{F_p (L_{pn + p + q - 1}  - L_{q - 1} )-(F_{p - 1}  - 1)(L_{pn + p + q} - L_q)}{{L_p  - 1 + ( - 1)^{p - 1} }}\,.
\end{equation}
\subsubsection*{Generating functions of Fibonacci and Lucas numbers with indices in arithmetic progression}
Setting $x=y\alpha^p$ in the identity
\begin{equation}
\sum_{j = 0}^\infty  {x^j }  = \frac{1}{{1 - x}}
\end{equation}
and multiplying through by $\alpha^q$ gives
\begin{equation}
\sum_{j = 0}^\infty  {\alpha ^{pj + q} y^j }  = \frac{{\alpha ^q }}{{1 - \alpha ^p y}} = \frac{{F_q \alpha  + F_{q - 1} }}{{ - yF_p \alpha  + 1 - yF_{p - 1} }}\,.
\end{equation}
Application of identity \eqref{eq.hxtle1n} and properties P5 and P1 then produces
\begin{equation}\label{eq.pqhgv3t}
\sum_{j = 0}^\infty  {F_{pj + q} y^j }  = \frac{{F_q  + ( - 1)^q F_{p - q} y}}{{1 - L_p y + ( - 1)^p y^2 }}\,.
\end{equation}
To find the corresponding Lucas result, we write
\begin{equation}
\sum_{j = 0}^\infty  {\alpha ^{pj + q}\sqrt 5 y^j }  = \frac{{\alpha ^q\sqrt 5 }}{{1 - \alpha ^p y}}
\end{equation}
and use identity \eqref{eq.oos3hdl} and properties P5 and P1, obtaining
\begin{equation}\label{eq.sdhqsd6}
\sum_{j = 0}^\infty  {L_{pj + q} y^j }  = \frac{{L_q  - ( - 1)^q L_{p - q} y}}{{1 - L_p y + ( - 1)^p y^2 }}\,.
\end{equation}
Identity \eqref{eq.pqhgv3t}, but not \eqref{eq.sdhqsd6}, was reported in Koshy \cite[identity 18, p.230]{koshy}. The case $q=0$ in \eqref{eq.pqhgv3t} was first obtained by Hoggatt \cite{hoggatt71b} while the case $p=1$ in \eqref{eq.sdhqsd6} is also found in Koshy \cite[identity (19.2), p.231]{koshy}.
\section{Main results}\label{sec.auf9q1i}
\subsection{Recurrence relations}
\begin{theorem}
The following identities hold for integers $p$, $q$ and $r$:
\begin{equation}\label{eq.u6g3quu}
F_{p + q + r}  = F_{q - 1} F_r F_{p - 1} + (F_{q + 1} F_r  + F_{q - 1} F_{r - 1} ) F_p + F_q F_{r + 1} F_{p + 1}\,,  
\end{equation}                                                                                         
\begin{equation}\label{eq.hsiqhgv}                                                                                       
F_{p + q + r}  = F_{p - 1} F_r F_{q - 1} + (F_{p + 1} F_r  + F_{p - 1} F_{r - 1} ) F_q + F_p F_{r + 1} F_{q + 1}\,,   
\end{equation}                                                                                         
\begin{equation}                                                                                       
F_{p + q + r}  = F_{q - 1} F_p F_{r - 1} + (F_{q + 1} F_p  + F_{q - 1} F_{p - 1} ) F_r + F_q F_{p + 1} F_{r + 1} \,,  
\end{equation}                                                                                         
\begin{equation}\label{eq.j54wavw}                                                                                      
F_{p + q + r}  = F_{r - 1} F_q F_{p - 1} + (F_{r + 1} F_q  + F_{r - 1} F_{q - 1} ) F_p + F_r F_{q + 1} F_{p + 1}\,, 
\end{equation}

\medskip

\begin{equation}
L_{p + q + r}  = F_{q - 1} F_r L_{p - 1} + (F_{q + 1} F_r  + F_{q - 1} F_{r - 1} ) L_p + F_q F_{r + 1} L_{p + 1}\,,   
\end{equation}                                                                                         
\begin{equation}                                                                                       
L_{p + q + r}  = F_{p - 1} F_r L_{q - 1} + (F_{p + 1} F_r  + F_{p - 1} F_{r - 1} ) L_q + F_p F_{r + 1} L_{q + 1}\,,   
\end{equation}                                                                                         
\begin{equation}                                                                                       
L_{p + q + r}  = F_{q - 1} F_p L_{r - 1} + (F_{q + 1} F_p  + F_{q - 1} F_{p - 1} ) L_r + F_q F_{p + 1} L_{r + 1}\,,   
\end{equation}                                                                                         
\begin{equation}                                                                                       
L_{p + q + r}  = F_{r - 1} F_q L_{p - 1} + (F_{r + 1} F_q  + F_{r - 1} F_{q - 1} ) L_p + F_r F_{q + 1} L_{p + 1}\,,  
\end{equation}                                                                                           

\medskip

\begin{equation}
5F_{p + q + r}  = L_{q - 1} L_r F_{p - 1} + (L_{q + 1} L_r  + L_{q - 1} L_{r - 1} ) F_p + L_q L_{r + 1} F_{p + 1}\,,  
\end{equation}                                                                                         
\begin{equation}                                                                                       
5F_{p + q + r}  = L_{p - 1} L_r F_{q - 1} + (L_{p + 1} L_r  + L_{p - 1} L_{r - 1} ) F_q + L_p L_{r + 1} F_{q + 1}\,,   
\end{equation}                                                                                         
\begin{equation}                                                                                       
5F_{p + q + r}  = L_{q - 1} L_p F_{r - 1} + (L_{q + 1} L_p  + L_{q - 1} L_{p - 1} ) F_r + L_q L_{p + 1} F_{r + 1}\,,  
\end{equation}                                                                                         
\begin{equation}                                                                                       
5F_{p + q + r}  = L_{r - 1} L_q F_{p - 1} + (L_{r + 1} L_q  + L_{r - 1} L_{q - 1} ) F_p + L_r L_{q + 1} F_{p + 1}\,,  
\end{equation}                                                                                           

\medskip

\begin{equation}                                                                                      
5L_{p + q + r}  = L_{q - 1} L_r L_{p - 1} + (L_{q + 1} L_r  + L_{q - 1} L_{r - 1} ) L_p + L_q L_{r + 1} L_{p + 1}\,, 
\end{equation}                                                                                         
\begin{equation}                                                                                       
5L_{p + q + r}  = L_{p - 1} L_r L_{q - 1} + (L_{p + 1} L_r  + L_{p - 1} L_{r - 1} ) L_q + L_p L_{r + 1} L_{q + 1}\,,
\end{equation}                                                                                         
\begin{equation}                                                                                       
5L_{p + q + r}  = L_{q - 1} L_p L_{r - 1} + (L_{q + 1} L_p  + L_{q - 1} L_{p - 1} ) L_r + L_q L_{p + 1} L_{r + 1}\,,
\end{equation}                                                                                         
\begin{equation}\label{eq.y5dseg8}                                                                                      
5L_{p + q + r}  = L_{r - 1} L_q L_{p - 1} + (L_{r + 1} L_q  + L_{r - 1} L_{q - 1} ) L_p + L_r L_{q + 1} L_{p + 1}\,.
\end{equation}

\end{theorem}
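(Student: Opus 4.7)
The plan is to imitate the method used earlier for the two-variable addition formula \eqref{eq.uhyuo34}, now applied to the associative product $\alpha^{p+q+r} = \alpha^p\alpha^q\alpha^r$. Applying \eqref{eq.hxtle1n} to each of the four powers of $\alpha$ appearing on both sides gives
\[
\alpha F_{p+q+r} + F_{p+q+r-1} = (\alpha F_p + F_{p-1})(\alpha F_q + F_{q-1})(\alpha F_r + F_{r-1}).
\]
Expanding the triple product produces eight monomials involving $\alpha^0,\alpha^1,\alpha^2,\alpha^3$; the two higher powers are reduced to the normal form $a\alpha + b$ via the relations $\alpha^2 = \alpha+1$ and $\alpha^3 = 2\alpha+1$, both instances of \eqref{eq.hxtle1n}. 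Comparing the coefficient of $\alpha$ on the two sides via property~P1 then expresses $F_{p+q+r}$ as a sum of seven Fibonacci triple-products; one regrouping using $F_{n+1}=F_n+F_{n-1}$ puts it into the compact three-term form \eqref{eq.u6g3quu}.

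The identities \eqref{eq.hsiqhgv}--\eqref{eq.j54wavw} are the other distinct-looking arrangements produced by the same calculation: since $F_{p+q+r}$ is symmetric under permutations of its arguments, any relabelling of \eqref{eq.u6g3quu} yields an equally valid identity, and three further relabellings give the remaining three forms. Equivalently, one may re-derive each by reordering the three factors on the right before expanding.

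For the Lucas-type identities I replace the starting point by its $\sqrt 5$-weighted version, distributing the factors of $\sqrt 5$ among $\alpha^p$, $\alpha^q$, $\alpha^r$. The four identities for $L_{p+q+r}$ follow from
\[
\alpha^{p+q+r}\sqrt 5 = (\alpha^p\sqrt 5)\,\alpha^q\,\alpha^r,
\]
using \eqref{eq.oos3hdl} on the weighted factor and \eqref{eq.hxtle1n} on the other two; the four identities for $5F_{p+q+r}$ follow from
\[
5\alpha^{p+q+r} = \alpha^p(\alpha^q\sqrt 5)(\alpha^r\sqrt 5),
\]
using \eqref{eq.oos3hdl} on the two weighted factors and \eqref{eq.hxtle1n} on the third; and the four identities for $5L_{p+q+r}$ follow from
\[
5\sqrt 5\,\alpha^{p+q+r} = (\alpha^p\sqrt 5)(\alpha^q\sqrt 5)(\alpha^r\sqrt 5),
\]
where \eqref{eq.oos3hdl} is applied to every factor on the right and implicitly on the left via $5\alpha^{p+q+r}\sqrt 5 = 5(\alpha L_{p+q+r}+L_{p+q+r-1})$. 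In each case, expansion, reduction by $\alpha^2=\alpha+1$, $\alpha^3=2\alpha+1$, and comparison of $\alpha$-coefficients via~P1 is structurally identical to the Fibonacci calculation, with $L$-subscripts appearing precisely on whichever of $p,q,r$ carried a $\sqrt 5$.

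The hard part is purely bookkeeping: the expansion produces eight monomials, the reduction combines the single $\alpha^3$-term with a shifted copy to give the coefficient $2F_pF_qF_r$, and the resulting seven-monomial $\alpha$-coefficient must be regrouped into the compact three-term form printed in the theorem using $F_{n+1}=F_n+F_{n-1}$ (and analogously $L_{n+1}=L_n+L_{n-1}$). Once \eqref{eq.u6g3quu} is verified in this way, the remaining fifteen identities follow from the same expansion with different $\sqrt 5$-weightings and permutations of $p,q,r$.
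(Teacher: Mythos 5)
Your proposal is correct and takes essentially the same route as the paper: both start from $\alpha^{p+q+r}=\alpha^p\alpha^q\alpha^r$ and its $\sqrt 5$-weighted variants, apply \eqref{eq.hxtle1n} and \eqref{eq.oos3hdl} to the factors, equate coefficients of $\alpha$ via property P1, and obtain the remaining forms by permuting $p$, $q$, $r$. Your extra bookkeeping (reducing via $\alpha^2=\alpha+1$, $\alpha^3=2\alpha+1$ and regrouping with $F_{n+1}=F_n+F_{n-1}$) is exactly the detail the paper leaves implicit, and it checks out.
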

\begin{proof}
Identities \eqref{eq.u6g3quu} -- \eqref{eq.y5dseg8} are derived by applying \eqref{eq.hxtle1n} and \eqref{eq.oos3hdl} to the following identities:
\[
\alpha ^{p + q + r}  = \alpha ^p \alpha ^q \alpha ^r\,,
\]
\[
\alpha ^{p + q + r} \sqrt 5  = (\alpha ^p \sqrt 5 )\alpha ^q \alpha ^r\,, 
\]
\[
5\alpha ^{p + q + r}  = (\alpha ^p \sqrt 5 )(\alpha ^q \sqrt 5 )\alpha ^r 
\]
and
\[
5\alpha ^{p + q + r} \sqrt 5  = (\alpha ^p \sqrt 5 )(\alpha ^q \sqrt 5 )(\alpha ^r \sqrt 5 )\,.
\]
Note that identities \eqref{eq.hsiqhgv} -- \eqref{eq.j54wavw} are obtained from identity \eqref{eq.u6g3quu} by interchanging two indices from $p$, $q$ and $r$.
\end{proof}
\begin{theorem}
The following identities hold for integers $p$, $q$, $r$, $s$ and $t$:
\begin{equation}\label{eq.qbbejfn}
F_{p + q - r} F_{t - s + r}  + F_{p + q - r - 1} F_{t - s + r - 1}  = F_{p - s} F_{t + q}  + F_{p - s - 1} F_{t + q - 1}\,,
\end{equation}
\begin{equation}\label{eq.aiu1z6u}
F_{p + q - r} L_{t - s + r}  + F_{p + q - r - 1} L_{t - s + r - 1}  = F_{p - s} L_{t + q}  + F_{p - s - 1} L_{t + q - 1}\,,
\end{equation}
\begin{equation}\label{eq.a12i18b}
L_{p + q - r} F_{t - s + r}  + L_{p + q - r - 1} F_{t - s + r - 1}  = L_{p - s} F_{t + q}  + L_{p - s - 1} F_{t + q - 1}
\end{equation}
and
\begin{equation}\label{eq.yo685jl}
L_{p + q - r} L_{t - s + r}  + L_{p + q - r - 1} L_{t - s + r - 1}  = L_{p - s} L_{t + q}  + L_{p - s - 1} L_{t + q - 1}\,.
\end{equation}
\end{theorem}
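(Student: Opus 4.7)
The plan is to exploit the tautology $\alpha^A\alpha^B=\alpha^{A+B}$ together with two different factorizations of one common exponent. The key observation is that
\[
(p+q-r)+(t-s+r)=(p-s)+(t+q)=p+q+t-s,
\]
so $\alpha^{p+q-r}\alpha^{t-s+r}$ and $\alpha^{p-s}\alpha^{t+q}$ are the same power of $\alpha$, independently of $r$. Expanding each side through the golden-ratio identities \eqref{eq.hxtle1n} and \eqref{eq.oos3hdl} and reducing $\alpha^2$ by $\alpha^2=\alpha+1$ will yield expressions of the form $\alpha X+Y$; equating the constant parts via property~P1 will produce the four identities in the theorem.

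To establish \eqref{eq.qbbejfn} I would apply \eqref{eq.hxtle1n} to every $\alpha$-power in the equation $\alpha^{p+q-r}\alpha^{t-s+r}=\alpha^{p-s}\alpha^{t+q}$, giving
\[
(\alpha F_{p+q-r}+F_{p+q-r-1})(\alpha F_{t-s+r}+F_{t-s+r-1})=(\alpha F_{p-s}+F_{p-s-1})(\alpha F_{t+q}+F_{t+q-1}).
\]
Expanding a generic product $(\alpha U+u)(\alpha V+v)$ and replacing $\alpha^2$ by $\alpha+1$ yields $\alpha(UV+Uv+uV)+(UV+uv)$, so the constant term on each side is precisely $F_aF_b+F_{a-1}F_{b-1}$ with the appropriate indices. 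Property~P1 then delivers \eqref{eq.qbbejfn}.

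The three Lucas variants proceed in the same spirit after inserting appropriate factors of $\sqrt 5$ and invoking \eqref{eq.oos3hdl} in place of \eqref{eq.hxtle1n} on those factors. For \eqref{eq.aiu1z6u} I would start from $\alpha^{p+q-r}(\alpha^{t-s+r}\sqrt 5)=\alpha^{p-s}(\alpha^{t+q}\sqrt 5)$, applying \eqref{eq.hxtle1n} to the first factor on each side and \eqref{eq.oos3hdl} to the second. For \eqref{eq.a12i18b} I would swap the placement of the $\sqrt 5$, writing $(\alpha^{p+q-r}\sqrt 5)\alpha^{t-s+r}=(\alpha^{p-s}\sqrt 5)\alpha^{t+q}$ and reversing the roles. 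For \eqref{eq.yo685jl} I would multiply by a full factor of $5$, using $(\alpha^{p+q-r}\sqrt 5)(\alpha^{t-s+r}\sqrt 5)=(\alpha^{p-s}\sqrt 5)(\alpha^{t+q}\sqrt 5)$, so that \eqref{eq.oos3hdl} governs all four factors. In each case the generic constant-term formula $UV+uv$ with $U,u\in\{F_\bullet,L_\bullet\}$ and $V,v\in\{F_\bullet,L_\bullet\}$ produces the desired side of the identity.

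There is no genuine obstacle here: the statements essentially assert that two different factorizations of the same monomial $\alpha^{p+q+t-s}$ (or its $\sqrt 5$ or $5$ multiple) lead to the same Binet expansion, and the proof is routine bookkeeping once $\alpha^2$ is reduced to $\alpha+1$. The only step requiring attention is keeping track of which of \eqref{eq.hxtle1n} or \eqref{eq.oos3hdl} applies to each factor, so that the mixed $F$/$L$ products on the two sides of each identity are produced correctly.
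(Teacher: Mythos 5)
Your proposal is correct and follows essentially the same route as the paper: you start from the same four identities $\alpha^{p+q-r}\alpha^{t-s+r}=\alpha^{p-s}\alpha^{t+q}$ (with the same placements of $\sqrt 5$), expand via \eqref{eq.hxtle1n} and \eqref{eq.oos3hdl}, reduce $\alpha^2=\alpha+1$, and extract the rational parts by property P1. The generic expansion $(\alpha U+u)(\alpha V+v)=\alpha(UV+Uv+uV)+(UV+uv)$ correctly identifies the constant term as the quantity appearing in each identity, so the argument goes through exactly as in the paper.
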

\begin{proof}
Identity \eqref{eq.qbbejfn} is proved by applying identity \eqref{eq.hxtle1n} to the identity 
\[
\alpha^{p+q-r}\alpha^{t-s+r}=\alpha^{p-s}\alpha^{t+q}\,,
\]
multiplying out the products and applying property P1.

\medskip

Identities \eqref{eq.aiu1z6u} and \eqref{eq.a12i18b} are derived by writing
\[
\alpha^{p+q-r}(\alpha^{t-s+r}\sqrt 5)=\alpha^{p-s}(\alpha^{t+q}\sqrt 5)
\]
and
\[
(\alpha^{p+q-r}\sqrt 5)\alpha^{t-s+r}=(\alpha^{p-s}\sqrt 5)\alpha^{t+q}\,,
\]
and applying identities \eqref{eq.hxtle1n} and \eqref{eq.oos3hdl} and property P1.

\medskip

Finally identity \eqref{eq.yo685jl} is derived from
\[
(\alpha^{p+q-r}\sqrt 5)(\alpha^{t-s+r}\sqrt 5)=(\alpha^{p-s}\sqrt 5)(\alpha^{t+q}\sqrt 5)\,.
\]
\end{proof}
\subsection{Summation identities}
\subsubsection{Binomial summation identities}
\begin{lemma}
The following identities hold for positive integer $n$ and arbitrary $x$ and $y$:
\begin{equation}\label{eq.g81atqv}
\sum_{j = 0}^n {\binom njy^j x^{n - j}}  = (x + y)^n\,,
\end{equation}
\begin{equation}\label{eq.xmac84j}
\sum_{j = 0}^n {( - 1)^j \binom nj(x + y)^j x^{n - j}}  = ( - 1)^n y^n\,,
\end{equation}
\begin{equation}\label{eq.y8wpfl4}
\sum_{j = 0}^n {( - 1)^j \binom njy^j (x + y)^{n - j} }  = x^n\,,
\end{equation}
\begin{equation}\label{eq.gt826zn}
\sum_{j = 0}^n {\binom njjy^{j - 1} x^{n - j}}  = n(x + y)^{n - 1}\,,
\end{equation}
\begin{equation}\label{eq.r7394o5}
\sum_{j = 0}^n {( - 1)^j \binom njj(x + y)^{j - 1} x^{n - j}}  = ( - 1)^n ny^{n - 1}
\end{equation}
and
\begin{equation}\label{eq.c0t71uc}
\sum_{j = 1}^n {( - 1)^{j - 1} \binom njy^{j - 1} j(x + y)^{n - j} }  = nx^{n - 1}\,.
\end{equation}

\end{lemma}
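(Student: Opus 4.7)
All six identities reduce to the binomial theorem. I would take \eqref{eq.g81atqv} as the binomial theorem itself, derive \eqref{eq.xmac84j} and \eqref{eq.y8wpfl4} by direct substitution, and then obtain the three weighted sums \eqref{eq.gt826zn}, \eqref{eq.r7394o5} and \eqref{eq.c0t71uc} via the combinatorial identity $j\binom{n}{j}=n\binom{n-1}{j-1}$ followed by an index shift.

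\medskip

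Identity \eqref{eq.g81atqv} is the binomial theorem applied to $(x+y)^n$. For \eqref{eq.xmac84j}, I would expand $(x-(x+y))^n=(-y)^n=(-1)^n y^n$ by the binomial theorem, obtaining $\sum_{j=0}^n\binom{n}{j}x^{n-j}(-(x+y))^j$ on the left. Identity \eqref{eq.y8wpfl4} comes from the analogous expansion of $((x+y)-y)^n=x^n$, with $x+y$ treated as a single base.

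\medskip

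For \eqref{eq.gt826zn}, \eqref{eq.r7394o5} and \eqref{eq.c0t71uc} the common recipe is: apply $j\binom{n}{j}=n\binom{n-1}{j-1}$, extract the factor $n$, and reindex via $i=j-1$. In each case the remaining sum is an instance of one of the three base identities with $n$ replaced by $n-1$: \eqref{eq.gt826zn} reduces to \eqref{eq.g81atqv}; \eqref{eq.c0t71uc} reduces to \eqref{eq.y8wpfl4}; and \eqref{eq.r7394o5}, after absorbing the sign shift $(-1)^{j}=-(-1)^{i}$, reduces to \eqref{eq.xmac84j} and yields $-n\cdot(-1)^{n-1}y^{n-1}=(-1)^n n y^{n-1}$ on the right. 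An equivalent route is to differentiate each of the three base identities with respect to $y$, treating $x+y$ as an independent variable where needed.

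\medskip

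There is no serious obstacle; the only delicate point is the sign bookkeeping in \eqref{eq.r7394o5}, where one must track the factor $(-1)^{j}\mapsto -(-1)^{i}$ through the reindexing and combine it with the $(-1)^{n-1}$ supplied by the reduced form of \eqref{eq.xmac84j}, and likewise the verification in \eqref{eq.c0t71uc} that the $j=0$ term absent from the left-hand side corresponds correctly to the index range $i=0,\dots,n-1$ on the right.
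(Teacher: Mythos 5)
Your proof is correct, and it reduces the lemma to the binomial theorem just as the paper does, but it handles the three weighted sums \eqref{eq.gt826zn}, \eqref{eq.r7394o5}, \eqref{eq.c0t71uc} by a genuinely different mechanism. The paper starts from the parameterized identity $\sum_{j=0}^n\binom njy^je^{jz}x^{n-j}=(x+ye^z)^n$, obtains \eqref{eq.g81atqv} at $z=0$, and produces the factor $j$ by differentiating with respect to $z$ before setting $z=0$ (noting more generally that $\sum_j\binom njj^ry^jx^{n-j}=\frac{d^r}{dz^r}(x+ye^z)^n\big|_{z=0}$); the remaining identities are then dismissed as ``obvious transformations.'' You instead use the absorption identity $j\binom nj=n\binom{n-1}{j-1}$ with the reindexing $i=j-1$, which reduces \eqref{eq.gt826zn}, \eqref{eq.r7394o5} and \eqref{eq.c0t71uc} to \eqref{eq.g81atqv}, \eqref{eq.xmac84j} and \eqref{eq.y8wpfl4} at order $n-1$, and your sign bookkeeping in \eqref{eq.r7394o5} ($-n\cdot(-1)^{n-1}y^{n-1}=(-1)^nny^{n-1}$) is right. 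Your route is calculus-free, makes explicit the substitutions the paper leaves implicit (expanding $(x-(x+y))^n$ and $((x+y)-y)^n$ for \eqref{eq.xmac84j} and \eqref{eq.y8wpfl4}), and works verbatim in any commutative ring; the paper's exponential-parameter device is less elementary but scales immediately to higher weights $j^r$, which is why the author records it. You also note the differentiation route as an alternative, so the two arguments are fully reconciled.
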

\begin{proof}
Setting $z=0$ in the binomial identity
\begin{equation}\label{eq.bx3ymai}
\sum_{j = 0}^n {\binom njy^j e^{jz} x^{n - j}}  = (x + ye^z )^n 
\end{equation}
gives identity \eqref{eq.g81atqv}. Identities \eqref{eq.xmac84j} and \eqref{eq.y8wpfl4} are obtained from identity \eqref{eq.g81atqv} by obvious transformations. Identity \eqref{eq.gt826zn} is obtained by differentiating identity \eqref{eq.bx3ymai} with respect to $z$ and then setting $z$ to zero. 
More generally,
\begin{equation}
\sum_{j = 0}^n {\binom njj^ry^jx^{n - j}}  = \left. {\frac{{d^r }}{{dz^r }}(x + ye^z )^n } \right|_{z = 0}\,.
\end{equation}

Identities \eqref{eq.r7394o5} and \eqref{eq.c0t71uc} are obtained from identity \eqref{eq.gt826zn} by transformations.

\end{proof}
\begin{theorem}
The following identities hold for integers $k$, $t$, $s$ and positive integer $n$:
\begin{equation}\label{eq.rrfqs7d}
\sum_{j = 0}^n {( - 1)^{tj} \binom njF_k^j F_t^{n - j} F_{(s + k)n - (t + k)j} }  = F_{t + k}^n F_{sn}\,,
\end{equation}
\begin{equation}\label{eq.k3iof5p}
\sum_{j = 0}^n {( - 1)^{tj} \binom njF_k^j F_t^{n - j} L_{(s + k)n - (t + k)j} }  = F_{t + k}^n L_{sn}\,,
\end{equation}
\begin{equation}\label{eq.hitv5kd}
\sum_{j = 0}^n {( - 1)^j \binom njF_{k + t}^j F_t^{n - j} F_{(s + k)n - kj} }  = ( - 1)^{n(t + 1)} F_k^n F_{n(s - t)}\,,
\end{equation}
\begin{equation}
\sum_{j = 0}^n {( - 1)^j \binom njF_{k + t}^j F_t^{n - j} L_{(s + k)n - kj} }  = ( - 1)^{n(t + 1)} F_k^n L_{n(s - t)}\,,
\end{equation}
\begin{equation}
\sum_{j = 0}^n {( - 1)^{(t + 1)j} \binom njF_k^j F_{k + t}^{n - j} F_{sn - tj} }  = F_t^n F_{n(s + k)}\,,
\end{equation}
\begin{equation}
\sum_{j = 0}^n {( - 1)^{(t + 1)j} \binom njF_k^j F_{k + t}^{n - j} L_{sn - tj} }  = F_t^n L_{n(s + k)}\,,
\end{equation}
\begin{equation}
(-1)^t\sum_{j = 1}^n {( - 1)^{tj} \binom njjF_k^{j - 1} F_t^{n - j} F_{(k + s)n + t - s - (k + t)j} }  = nF_{k + t}^{n - 1} F_{s(n - 1)}\,,
\end{equation}
\begin{equation}
(-1)^t\sum_{j = 1}^n {( - 1)^{tj} \binom njjF_k^{j - 1} F_t^{n - j} L_{(k + s)n + t - s - (k + t)j} }  = nF_{k + t}^{n - 1} L_{s(n - 1)}\,,
\end{equation}
\begin{equation}
\sum_{j = 1}^n {( - 1)^j \binom njjF_{k + t}^{j - 1} F_t^{n - j} F_{(k + s)n - s - kj} }  = ( - 1)^{n(t + 1) + t} nF_k^{n - 1} F_{(s - t)(n - 1)}\,,
\end{equation}
\begin{equation}
\sum_{j = 1}^n {( - 1)^j \binom njjF_{k + t}^{j - 1} F_t^{n - j} L_{(k + s)n - s - kj} }  = ( - 1)^{n(t + 1) + t} nF_k^{n - 1} L_{(s - t)(n - 1)}\,,
\end{equation}
\begin{equation}
( - 1)^{t + 1} \sum_{j = 1}^n {( - 1)^{(t + 1)j} \binom njjF_k^{j - 1} F_{k + t}^{n - j} F_{s(n - 1) + t - tj} }  = nF_t^{n - 1} F_{(s + k)(n - 1)}
\end{equation}
and
\begin{equation}\label{eq.igepvk1}
( - 1)^{t + 1} \sum_{j = 1}^n {( - 1)^{(t + 1)j} \binom njjF_k^{j - 1} F_{k + t}^{n - j} L_{s(n - 1) + t - tj} }  = nF_t^{n - 1} L_{(s + k)(n - 1)}\,.
\end{equation}

\end{theorem}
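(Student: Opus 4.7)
The plan is to apply each of the six binomial identities of the preceding lemma under the single substitution
$$x = \alpha^{s+k} F_t, \qquad y = (-1)^t \alpha^{s-t} F_k,$$
for which identity \eqref{eq.snj8qge} immediately gives $x + y = \alpha^s F_{k+t}$. The virtue of this choice is that each of $x$, $y$, and $x+y$ is an integer multiple of a pure power of $\alpha$, so every term on either side of any of \eqref{eq.g81atqv}--\eqref{eq.c0t71uc} becomes a sign times a monomial in $F_k$, $F_t$, $F_{k+t}$ times an integer power of $\alpha$ whose exponent is linear in $j$. Each binomial identity therefore collapses to an equation of the shape: (sum of $\alpha$-monomials indexed by $j$) equals (single $\alpha$-monomial).

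From each such $\alpha$-identity I would then derive both a Fibonacci and a Lucas member of the theorem. The Fibonacci identities \eqref{eq.rrfqs7d}, \eqref{eq.hitv5kd}, and their four relatives follow by applying \eqref{eq.hxtle1n} in the form $\alpha^m = \alpha F_m + F_{m-1}$ to every power of $\alpha$ appearing and then equating coefficients of $\alpha$ via property P1. The Lucas counterparts \eqref{eq.k3iof5p}--\eqref{eq.igepvk1} follow by first multiplying the corresponding $\alpha$-identity through by $\sqrt 5$ and using \eqref{eq.oos3hdl} in the form $\alpha^m \sqrt 5 = \alpha L_m + L_{m-1}$, again reading off coefficients of $\alpha$. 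The differentiated binomial identities \eqref{eq.gt826zn}--\eqref{eq.c0t71uc} feed through by the same mechanism and are precisely what produces the four pairs of identities carrying an extra factor $j$ together with the shifted subscript coming from $y^{j-1}$ or $(x+y)^{j-1}$.

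The main obstacle is purely bookkeeping. I must verify that the exponent of $\alpha$ in the generic $j$th term simplifies in each of the six cases to exactly the subscript written in the theorem, namely $(s+k)n-(t+k)j$, $(s+k)n-kj$, $sn-tj$ for the undifferentiated cases, and the analogous shifted expressions $(k+s)n+t-s-(k+t)j$, $(k+s)n-s-kj$, $s(n-1)+t-tj$ for the differentiated ones. The sign prefactors require the most care: in the differentiated cases, $y^{j-1}$ contributes $(-1)^{t(j-1)} = (-1)^{tj}(-1)^t$ and $y^{n-1}$ contributes $(-1)^{t(n-1)} = (-1)^{tn}(-1)^t$, and pulling the stray $(-1)^t$ outside the sum, and combining $(-1)^n$ from \eqref{eq.xmac84j} or \eqref{eq.r7394o5} with $(-1)^{tn}$, accounts precisely for the overall factors $(-1)^t$, $(-1)^{t+1}$, $(-1)^{n(t+1)}$, and $(-1)^{n(t+1)+t}$ that appear in the statement. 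Beyond this sign-and-exponent accounting, no new idea is required; all twelve identities fall out in parallel from the one substitution above.
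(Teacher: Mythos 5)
Your proposal is correct and is essentially the paper's own proof: the same substitution $x=\alpha^{s+k}F_t$, $y=(-1)^t\alpha^{s-t}F_k$ (so $x+y=\alpha^sF_{k+t}$ by \eqref{eq.snj8qge}) into the six binomial identities of the preceding lemma, followed by \eqref{eq.hxtle1n} with property P1 for the Fibonacci cases and multiplication by $\sqrt5$ with \eqref{eq.oos3hdl} for the Lucas cases. Your sign and exponent bookkeeping (e.g.\ $(-1)^{t(j-1)}=(-1)^{tj}(-1)^t$ and the resulting prefactors $(-1)^t$, $(-1)^{t+1}$, $(-1)^{n(t+1)}$, $(-1)^{n(t+1)+t}$) checks out, so no gap remains.
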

\begin{proof}
Choosing $x=\alpha^{s+k}F_t$ and $y=(-1)^t\alpha^{s-t}F_k$ in identity \eqref{eq.g81atqv} and taking note of identity \eqref{eq.snj8qge}, we have
\begin{equation}\label{eq.phr1465}
\sum_{j = 0}^n {( - 1)^{tj} \binom njF_k^j F_t^{n - j} \alpha ^{(s + k)n - (t + k)j} }  = F_{k + t}^n \alpha ^{ns}\,.
\end{equation}
Application of identity \eqref{eq.hxtle1n} and property P1 to \eqref{eq.phr1465} produces identity \eqref{eq.rrfqs7d}. To prove \eqref{eq.k3iof5p}, multiply through identity \eqref{eq.phr1465} by $\sqrt 5$ to obtain
\begin{equation}\label{eq.s1ga4rb}
\sum_{j = 0}^n {( - 1)^{tj} \binom njF_k^j F_t^{n - j} \{\alpha ^{(s + k)n - (t + k)j} }\sqrt 5\}  = F_{k + t}^n \{\alpha ^{ns}\sqrt 5\}\,.
\end{equation}
Use of identity \eqref{eq.oos3hdl} and property P1 in identity \eqref{eq.s1ga4rb} gives identity \eqref{eq.k3iof5p}. Identities \eqref{eq.hitv5kd} -- \eqref{eq.igepvk1} are derived in a similar fashion.
\end{proof}
\begin{lemma}\label{lem.lig8fdt}
The following identities hold true for integer $r$, non-negative integer $n$, and arbitrary $x$ and $y$:
\begin{equation}\label{eq.z4ctfaa}
\sum_{j = 0}^n {\frac{{2n + 1}}{{n + j + 1}}\binom {n+j+1}{2j+1}(xy)^{r - j} (x - y)^{2j + 1} }=x^{r + n + 1} y^{r - n}  - y^{r + n + 1} x^{r - n}\,,
\end{equation}
\begin{equation}\label{eq.tqhg3b9}
\sum_{j = 0}^n {\frac{{2n + 1}}{{n + j + 1}}\binom {n+j+1}{2j+1}(x(x-y))^{r - j} y^{2j + 1} }=x^{r + n + 1} (x - y)^{r - n}  - (x - y)^{r + n + 1} x^{r - n}
\end{equation}
and
\begin{equation}\label{eq.cju184j}
\sum_{j = 0}^n {\frac{{2n + 1}}{{n + j + 1}}\binom {n+j+1}{2j+1}(y(y-x))^{r - j} x^{2j + 1} }=y^{r + n + 1} (y-x)^{r - n}  - (y-x)^{r + n + 1} y^{r - n}\,.
\end{equation}

\end{lemma}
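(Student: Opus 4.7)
The plan is to prove \eqref{eq.z4ctfaa} first and then obtain \eqref{eq.tqhg3b9} and \eqref{eq.cju184j} by two elementary substitutions. Each of the three identities is a polynomial identity in $x$ and $y$, and the common factor $(xy)^{r-n}$ can be stripped from both sides of \eqref{eq.z4ctfaa} using $(xy)^{r-j}=(xy)^{r-n}(xy)^{n-j}$ and $x^{r+n+1}y^{r-n}-y^{r+n+1}x^{r-n}=(xy)^{r-n}(x^{2n+1}-y^{2n+1})$. This reduces the statement to the classical Girard--Waring expansion
\[
x^{2n+1}-y^{2n+1}=\sum_{j=0}^{n}\frac{2n+1}{n+j+1}\binom{n+j+1}{2j+1}(xy)^{n-j}(x-y)^{2j+1},
\]
which expresses the alternating polynomial $x^{2n+1}-y^{2n+1}$ in terms of the generators $xy$ and $x-y$.

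To establish the expansion I would begin from the standard Waring formula
\[
x^{m}+y^{m}=\sum_{k=0}^{\lfloor m/2 \rfloor}(-1)^{k}\,\frac{m}{m-k}\binom{m-k}{k}(x+y)^{m-2k}(xy)^{k},
\]
specialize to odd $m=2n+1$, and substitute $y\mapsto -y$. The substitution turns the left side into $x^{2n+1}-y^{2n+1}$, and on the right it sends $x+y\mapsto x-y$ together with $(-1)^{k}(-xy)^{k}=(xy)^{k}$, eliminating all signs. The reindexing $k=n-j$, combined with $\binom{2n+1-k}{k}=\binom{n+j+1}{2j+1}$ and $(2n+1)/(2n+1-k)=(2n+1)/(n+j+1)$, then produces the formula in exactly the stated form; restoring the factor $(xy)^{r-n}$ yields \eqref{eq.z4ctfaa}.

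Identity \eqref{eq.tqhg3b9} follows from \eqref{eq.z4ctfaa} by replacing $y$ with $x-y$: this sends $xy\mapsto x(x-y)$, $x-y\mapsto x-(x-y)=y$, and transforms the right-hand side into $x^{r+n+1}(x-y)^{r-n}-(x-y)^{r+n+1}x^{r-n}$, as required. Identity \eqref{eq.cju184j} is then obtained from \eqref{eq.tqhg3b9} by interchanging $x$ and $y$; each side of \eqref{eq.tqhg3b9} is mapped directly onto the corresponding side of \eqref{eq.cju184j} with no residual sign (it is essential to route through \eqref{eq.tqhg3b9} here, since swapping $x$ and $y$ in \eqref{eq.z4ctfaa} directly only reproduces \eqref{eq.z4ctfaa} up to an overall $-1$).

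The main obstacle is the binomial-coefficient bookkeeping in the reindexing step. A self-contained alternative that avoids appealing to the Waring formula is to verify that both sides of the reduced identity satisfy the Chebyshev-type three-term recurrence $f_{n+1}=((x-y)^{2}+2xy)f_{n}-(xy)^{2}f_{n-1}$, with initial data $f_{0}=x-y$ and $f_{1}=(x-y)^{3}+3xy(x-y)$. On the left, the recurrence follows from $x^{2}+y^{2}=(x-y)^{2}+2xy$ combined with $z^{m+2}=(x+y)z^{m+1}-xy\,z^{m}$ for $z\in\{x,y\}$; on the right it reduces to a single binomial identity in the coefficients $\frac{2n+1}{n+j+1}\binom{n+j+1}{2j+1}$, which can be confirmed by comparing powers of $(x-y)$.
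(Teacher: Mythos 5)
Your proposal is correct, and it reaches the key identity by a slightly different route than the paper. The paper quotes Jennings' lemma, namely $\sum_{j=0}^{n}\frac{2n+1}{n+j+1}\binom{n+j+1}{2j+1}\bigl(\frac{z^2-1}{z}\bigr)^{2j}=\frac{z^{2}z^{2n}-z^{-2n}}{z^{2}-1}$, sets $z^{2}=x/y$ and clears fractions to get \eqref{eq.z4ctfaa}; you instead strip the common factor $(xy)^{r-n}$ to reduce \eqref{eq.z4ctfaa} to the homogeneous Girard--Waring expansion of $x^{2n+1}-y^{2n+1}$ and derive that from the classical Waring power-sum formula via $y\mapsto -y$ and the reindexing $k=n-j$ (your binomial bookkeeping checks out: $\binom{n+j+1}{n-j}=\binom{n+j+1}{2j+1}$ and $2n+1-k=n+j+1$). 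These are really two presentations of the same underlying expansion --- Jennings' identity is precisely the case $xy=1$ (take $x=z$, $y=z^{-1}$), which the paper then de-homogenizes by the substitution $z^{2}=x/y$ --- so the arguments are close cousins; what yours buys is reliance on a more standard classical formula in place of the specific cited lemma, plus (in your sketched three-term recurrence $f_{n+1}=((x-y)^{2}+2xy)f_{n}-(xy)^{2}f_{n-1}$, $f_{0}=x-y$, $f_{1}=(x-y)^{3}+3xy(x-y)$) the option of a fully self-contained inductive proof, though the coefficient identity needed to show the sum side satisfies that recurrence is left unverified. Your derivations of \eqref{eq.tqhg3b9} (replace $y$ by $x-y$) and \eqref{eq.cju184j} (then swap $x$ and $y$) are exactly the paper's, and your remark that one must route through \eqref{eq.tqhg3b9}, since swapping $x$ and $y$ in \eqref{eq.z4ctfaa} only reproduces it up to sign, is a correct and worthwhile observation.
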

\begin{proof}
Jennings \cite[Lemma (i)]{jennings93} derived an identity equivalent to the following:
\begin{equation}\label{eq.i42bggo}
\sum_{j = 0}^n {\frac{{2n + 1}}{{n + j + 1}}\binom{n+j+1}{2j+1}\left( \frac{z^2-1}{z} \right)^{2j}}  = \frac{{z^2 z^{2n}  - z^{ - 2n} }}{{z^2  - 1}}\,.
\end{equation} 
Setting $z^2=x/y$ in the above identity and clearing fractions gives identity \eqref{eq.z4ctfaa}. Identity \eqref{eq.tqhg3b9} is obtained by replacing $y$ with $x-y$ in identity \eqref{eq.z4ctfaa}. Identity \eqref{eq.cju184j} is obtained by interchanging $x$ with $y$ in identity \eqref{eq.tqhg3b9}.

\end{proof}
\begin{theorem}
The following identities hold for non-negative integer $n$ and integers $s$, $k$, $r$ and $t$:
\begin{equation}\label{eq.ick3pfg}
\begin{split}
&( - 1)^t\sum_{j = 0}^n { \frac{{2n + 1}}{{n + j + 1}}\binom{n+j+1}{2j+1}(F_{k + t} F_t )^{r - j} F_k^{2j + 1} F_{r(2s + k) + s - t - (2t + k)j} }\\
&\qquad\qquad= F_{k + t}^{r + n + 1} F_t^{r - n} F_{s(r + n + 1) + (s + k)(r - n)}  - F_t^{r + n + 1} F_{k + t}^{r - n} F_{(s + k)(r + n + 1) + s(r - n)}\,,
\end{split}
\end{equation}

\bigskip

\begin{equation}
\begin{split}
&( - 1)^t\sum_{j = 0}^n {\frac{{2n + 1}}{{n + j + 1}}\binom{n+j+1}{2j+1}(F_{k + t} F_t )^{r - j} F_k^{2j + 1} L_{r(2s + k) + s - t - (2t + k)j} }\\
&\qquad\qquad= F_{k + t}^{r + n + 1} F_k^{r - n} L_{s(r + n + 1) + (s + k)(r - n)}  - F_t^{r + n + 1} F_{k + t}^{r - n} L_{(s + k)(r + n + 1) + s(r - n)}\,,
\end{split}
\end{equation}

\bigskip

\begin{equation}
\begin{split}
&\sum_{j = 0}^n {( - 1)^{tj} \frac{{2n + 1}}{{n + j + 1}}\binom{n+j+1}{2j+1}(F_{k + t} F_t )^{r - j} F_t^{2j + 1} F_{r(2s - t) + s + k + (2k + t)j} }\\
&\qquad\qquad= ( - 1)^{tn} F_{k + t}^{r + n + 1} F_k^{r - n} F_{s(2r + 1) - t(r - n)}  - ( - 1)^{tn + t} F_k^{r + n + 1} F_{k + t}^{r - n} F_{s(2r + 1) - t(r + n + 1)}\,,
\end{split}
\end{equation}

\bigskip

\begin{equation}
\begin{split}
&\sum_{j = 0}^n {( - 1)^{tj} \frac{{2n + 1}}{{n + j + 1}}\binom{n+j+1}{2j+1}(F_{k + t} F_t )^{r - j} F_t^{2j + 1} L_{r(2s - t) + s + k + (2k + t)j} }\\
&\qquad\qquad= ( - 1)^{tn} F_{k + t}^{r + n + 1} F_k^{r - n} L_{s(2r + 1) - t(r - n)}  - ( - 1)^{tn + t} F_k^{r + n + 1} F_{k + t}^{r - n} L_{s(2r + 1) - t(r + n + 1)}\,,
\end{split}
\end{equation}

\bigskip

\begin{equation}
\begin{split}
&\sum_{j = 0}^n {( - 1)^{(t-1)j} \frac{{2n + 1}}{{n + j + 1}}\binom{n+j+1}{2j+1}(F_t F_k )^{r - j} F_{k + t}^{2j + 1} F_{s(2r + 1) + (k - t)r - (k - t)j} }\\
&\qquad\qquad= ( - 1)^{(t - 1)n} F_t^{r + n + 1} F_k^{r - n} F_{s(2r + 1) + k(r + n + 1) + t(n - r)}\\
&\quad\qquad\qquad  - ( - 1)^{(t - 1)(n + 1)} F_k^{r + n + 1} F_t^{r - n} F_{s(2r + 1) - k(n - r) - t(r + n + 1)}
\end{split}
\end{equation}
and
\begin{equation}\label{eq.l7ndcj9}
\begin{split}
&\sum_{j = 0}^n {( - 1)^{(t-1)j} \frac{{2n + 1}}{{n + j + 1}}\binom{n+j+1}{2j+1}(F_t F_k )^{r - j} F_{k + t}^{2j + 1} L_{s(2r + 1) + (k - t)r - (k - t)j} }\\
&\qquad\qquad= ( - 1)^{(t - 1)n} F_t^{r + n + 1} F_k^{r - n} L_{s(2r + 1) + k(r + n + 1) + t(n - r)}\\
&\quad\qquad\qquad  - ( - 1)^{(t - 1)(n + 1)} F_k^{r + n + 1} F_t^{r - n} L_{s(2r + 1) - k(n - r) - t(r + n + 1)}\,.
\end{split}
\end{equation}

\end{theorem}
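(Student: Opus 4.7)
The six identities split into three Fibonacci/Lucas pairs, each pair arising from one of \eqref{eq.z4ctfaa}, \eqref{eq.tqhg3b9}, \eqref{eq.cju184j}. The key idea is that the substitution
\[
x = \alpha^s F_{k+t},\qquad y = (-1)^t \alpha^{s-t} F_k
\]
reduces each of $x$, $y$, and $x-y$ to a single power of $\alpha$ times one Fibonacci number: indeed \eqref{eq.snj8qge} gives $x-y = \alpha^{s+k}F_t$, while the other two are immediate. Consequently $xy$, $x(x-y)$, $y(y-x)$, and all of the powers $x^a$, $y^a$, $(x-y)^a$ that appear on both sides of Lemma \ref{lem.lig8fdt} become monomials in $\alpha$ with Fibonacci-number products as coefficients.

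The plan is then uniform across the three pairs. Feeding $(x,y)$ into \eqref{eq.tqhg3b9} produces \eqref{eq.ick3pfg} and its Lucas partner: $x(x-y) = \alpha^{2s+k}F_{k+t}F_t$ supplies the $(F_{k+t}F_t)^{r-j}$ coefficient, $y^{2j+1}$ supplies the $F_k^{2j+1}$ coefficient together with an overall $(-1)^t$, and $x^{r+n+1}(x-y)^{r-n}$, $(x-y)^{r+n+1}x^{r-n}$ produce the two terms on the right. Feeding the same substitution into \eqref{eq.z4ctfaa} delivers the second pair, with $(xy)^{r-j}$ producing the $(F_{k+t}F_k)^{r-j}$ factor and $(x-y)^{2j+1}$ producing the $F_t^{2j+1}$ factor; feeding it into \eqref{eq.cju184j} delivers the third pair, with $x^{2j+1}$ producing the $F_{k+t}^{2j+1}$ factor and $y(y-x) = (-1)^{t+1}\alpha^{2s+k-t}F_tF_k$ producing the $(F_tF_k)^{r-j}$ factor. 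In each case one obtains an equation of the shape
\[
\sum_j c_j\,\alpha^{m_j} = A_1\,\alpha^{M_1} - A_2\,\alpha^{M_2},
\]
with $c_j$, $A_1$, $A_2$ products of Fibonacci numbers. Replacing every $\alpha^m$ by $\alpha F_m + F_{m-1}$ via \eqref{eq.hxtle1n} and equating coefficients of $\alpha$ (property P1) yields the Fibonacci identity of the pair; multiplying the same equation through by $\sqrt 5$ first, then invoking \eqref{eq.oos3hdl} and P1, yields the Lucas identity.

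The main obstacle is sign bookkeeping. When $y$ is raised to the odd exponent $2j+1$ the $(-1)^t$ in $y$ collapses to the constant $(-1)^t$ that pulls out of the sum (producing the prefactor of \eqref{eq.ick3pfg}); when $y$ is raised to $r-j$ (as in $(xy)^{r-j}$ of \eqref{eq.z4ctfaa}) the exponent splits as $(-1)^{tr}(-1)^{tj}$, giving the $(-1)^{tj}$ inside the summand and a $(-1)^{tr}$ that combines with the $(-1)^{t(r-n)}$, $(-1)^{t(r+n+1)}$ from $y^{r-n}$, $y^{r+n+1}$ on the right to yield the stated $(-1)^{tn}$, $(-1)^{tn+t}$ signs; the analogous splitting, together with the $(-1)^{2j+1} = -1$ coming from $(y-x)^{2j+1} = -(x-y)^{2j+1}$, accounts for the $(-1)^{(t-1)j}$, $(-1)^{(t-1)n}$, $(-1)^{(t-1)(n+1)}$ signs of the pair coming from \eqref{eq.cju184j}. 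Beyond this bookkeeping and the routine algebraic check that each $\alpha$-exponent collapses to the subscript displayed in the statement, the argument is mechanical.
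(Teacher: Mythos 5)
Your proposal is correct and is essentially the paper's own proof: the same Lemma \ref{lem.lig8fdt}, the same use of the decomposition \eqref{eq.snj8qge}, and the same final step of applying \eqref{eq.hxtle1n} (resp.\ multiplying by $\sqrt 5$ and applying \eqref{eq.oos3hdl}) and equating coefficients via property P1. The only differences are cosmetic but worth noting: the paper labels the substitution as $x=\alpha^{s+k}F_t$, $y=(-1)^t\alpha^{s-t}F_k$ (so that $x+y$ is the single power of $\alpha$), whereas your choice $x=\alpha^{s}F_{k+t}$, $y=(-1)^t\alpha^{s-t}F_k$ with $x-y=\alpha^{s+k}F_t$ is the reading that actually meshes with the lemma as stated; moreover your computation produces $(F_{k+t}F_k)^{r-j}$ in the third and fourth identities (and $F_t^{r-n}$ in the first right-hand term of the second), which silently corrects what appear to be typos in the theorem as printed.
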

\begin{proof}
Each of identities \eqref{eq.ick3pfg} -- \eqref{eq.l7ndcj9} is proved by setting $x=\alpha^{s+k}F_t$ and $y=(-1)^t\alpha^{s-t}F_k$ in the identities of Lemma \ref{lem.lig8fdt} and taking note of identity \eqref{eq.snj8qge} while making use of identities \eqref{eq.hxtle1n} and \eqref{eq.oos3hdl} and property~P1.
\end{proof}
\subsubsection{Summation identities not involving binomial coefficients}
\begin{lemma}[{\cite[Lemma 1]{adegoke18}}]\label{lem.u4bqbkc}
Let $(X_t)$ and $(Y_t)$ be any two sequences such that $X_t$ and $Y_t$, $t\in\Z$, are connected by a three-term recurrence relation $hX_t=f_1X_{t-a}+f_2Y_{t-b}$, where $h$, $f_1$ and $f_2$ are arbitrary non-vanishing complex functions, not dependent on $t$, and $a$ and $b$ are integers. Then, the following identity holds for integer $n$:
\[
f_2 \sum_{j = 0}^n {f_1^{n - j} h^j Y_{t - na  - b  + a j} }  = h^{n + 1} X_t  - f_1^{n + 1} X_{t - (n + 1)a }\,. 
\]

\end{lemma}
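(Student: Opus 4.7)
The plan is to isolate $f_2 Y_{t-b}$ from the recurrence and exploit a telescoping cancellation. Solving the given three-term recurrence for $Y$ yields
\[
f_2 Y_{t-b} = h X_t - f_1 X_{t-a}.
\]
Replacing $t$ by $t - na + aj$ gives
\[
f_2 Y_{t-na-b+aj} = h X_{t-(n-j)a} - f_1 X_{t-(n-j+1)a}.
\]
Multiplying through by $f_1^{n-j} h^j$ and summing $j$ from $0$ to $n$ produces
\[
f_2 \sum_{j=0}^n f_1^{n-j} h^j Y_{t-na-b+aj} = \sum_{j=0}^n f_1^{n-j} h^{j+1} X_{t-(n-j)a} - \sum_{j=0}^n f_1^{n-j+1} h^j X_{t-(n-j+1)a}.
\]

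The next step is to observe that the right-hand side telescopes. Shifting the dummy variable $j \mapsto j-1$ in the second sum rewrites its general term in the same shape $f_1^{n-j} h^{j+1} X_{t-(n-j)a}$ as the first sum (but with $j$ running from $-1$ to $n-1$ instead of $0$ to $n$). After this shift, each interior term ($j = 0, 1, \ldots, n-1$) of the first sum cancels with the corresponding term of the second. What survives is the endpoint $j = n$ of the first sum, contributing $h^{n+1} X_t$, and the endpoint $j = -1$ of the reindexed second sum, contributing $-f_1^{n+1} X_{t-(n+1)a}$. Together these give exactly the claimed right-hand side $h^{n+1} X_t - f_1^{n+1} X_{t-(n+1)a}$.

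The argument is essentially mechanical once the recurrence has been used to replace each $Y$-term by a difference of two $X$-terms, and no step is expected to pose a real obstacle. An alternative route is induction on $n$: the base case $n = 0$ is just the recurrence itself, and in the inductive step one applies the recurrence at index $t - (n+1)a$ to pick up the extra term $f_1^{n+1} f_2 Y_{t-(n+1)a-b}$ needed to pass from $n$ to $n+1$. The telescoping derivation seems cleaner and more informative, so that is the route I would take.
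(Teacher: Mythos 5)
Your telescoping argument is correct: isolating $f_2Y_{t-b}=hX_t-f_1X_{t-a}$, shifting $t\mapsto t-(n-j)a$, weighting by $f_1^{n-j}h^j$ and summing does collapse to $h^{n+1}X_t-f_1^{n+1}X_{t-(n+1)a}$, with only the $j=n$ and reindexed $j=-1$ endpoint terms surviving. The paper itself gives no proof of this lemma --- it is quoted from the earlier reference (Lemma 1 of the cited work) --- so there is nothing internal to compare against; your derivation (or the induction variant you sketch, which is the same telescoping read one step at a time) is the standard and expected argument, valid for $n\ge 0$, which is the range actually used in the paper.
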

\begin{theorem}
The following identities hold for integers $n$, $k$, $s$ and $t$:
\begin{equation}\label{eq.elbr7d1}
( - 1)^{nk + t - 1} F_k \sum_{j = 0}^n {( - 1)^{kj} F_{n(s - k) + s - t + 2kj} }  = F_t F_{(s + k)(n + 1)}  - F_{t + (n + 1)k} F_{s(n + 1)}\,,
\end{equation}
\begin{equation}\label{eq.zpc3vyq}
( - 1)^{nk + t - 1} F_k \sum_{j = 0}^n {( - 1)^{kj} L_{n(s - k) + s - t + 2kj} }  = F_t L_{(s + k)(n + 1)}  - F_{t + (n + 1)k} L_{s(n + 1)}\,,
\end{equation}
\begin{equation}\label{eq.t1gk197}
( - 1)^{nt + k - 1} F_t \sum_{j = 0}^n {( - 1)^{tj} F_{n(s - t) + s - k + 2tj} }  = F_k F_{(s + t)(n + 1)}  - F_{k + (n + 1)t} F_{s(n + 1)}\,,
\end{equation}
\begin{equation}
( - 1)^{nt + k - 1} F_t \sum_{j = 0}^n {( - 1)^{tj} L_{n(s - t) + s - k + 2tj} }  = F_k L_{(s + t)(n + 1)}  - F_{k + (n + 1)t} L_{s(n + 1)}\,,
\end{equation}
\begin{equation}
( - 1)^{ns + t - 1} F_s \sum_{j = 0}^n {( - 1)^{sj} F_{n(k - s) + k - t + 2sj} }  = F_t F_{(k + s)(n + 1)}  - F_{t + (n + 1)s} F_{k(n + 1)}
\end{equation}
and
\begin{equation}\label{eq.o9gxedr}
( - 1)^{ns + t - 1} F_s \sum_{j = 0}^n {( - 1)^{sj} L_{n(k - s) + k - t + 2sj} }  = F_t L_{(k + s)(n + 1)}  - F_{t + (n + 1)s} L_{k(n + 1)}\,.
\end{equation}

\end{theorem}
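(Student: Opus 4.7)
The plan is to apply Lemma~\ref{lem.u4bqbkc} to the three-term recurrence
\[
\alpha^u = (-1)^k \alpha^{u-2k} + \sqrt 5\, F_k\, \alpha^{u-k},
\]
which follows from the Binet-type identity $\alpha^k - \beta^k = \sqrt 5\, F_k$ together with $(-1)^k\alpha^{-k} = \beta^k$: multiplying by $\alpha^u$ yields $\alpha^{u+k} - (-1)^k\alpha^{u-k} = \sqrt 5\, F_k\,\alpha^u$, and shifting $u$ by $-k$ produces the displayed recurrence. With the assignments $X_u = Y_u = \alpha^u$, $h = 1$, $f_1 = (-1)^k$, $a = 2k$, $f_2 = \sqrt 5\, F_k$ and $b = k$, Lemma~\ref{lem.u4bqbkc} gives
\[
\sqrt 5\, F_k\, (-1)^{kn}\sum_{j=0}^n (-1)^{kj}\alpha^{u-(2n+1)k+2kj} = \alpha^u - (-1)^{k(n+1)}\alpha^{u-2k(n+1)}.
\]

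Applying the same recurrence with $k$ replaced by $k(n+1)$ collapses the right-hand side to $\sqrt 5\, F_{k(n+1)}\,\alpha^{u-k(n+1)}$. Dividing out $\sqrt 5$, substituting \eqref{eq.hxtle1n} for each $\alpha^m$, and equating coefficients of $\alpha$ via property P1 yields
\[
F_k\, (-1)^{kn} \sum_{j=0}^n (-1)^{kj} F_{u-(2n+1)k+2kj} = F_{k(n+1)} F_{u-k(n+1)}.
\]
The choice $u = (s+k)(n+1) - t$ aligns the sum's argument with the target form $n(s-k)+s-t+2kj$ and reduces $u - k(n+1)$ to $s(n+1) - t$. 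Multiplying through by $(-1)^{t-1}$ and rewriting $F_{s(n+1)-t}$ through $F_{-m} = (-1)^{m-1}F_m$ transforms the right-hand side into $(-1)^{s(n+1)}F_{k(n+1)}F_{t-s(n+1)}$, which by identity \eqref{eq.wo5odhd} (applied with $s \to t$, $k \to k(n+1)$, $t \to s(n+1)$) equals $F_t F_{(s+k)(n+1)} - F_{t+(n+1)k}F_{s(n+1)}$, thereby establishing \eqref{eq.elbr7d1}. The Lucas-sum identity \eqref{eq.zpc3vyq} is obtained by the same route except that the factor $\sqrt 5$ is retained and \eqref{eq.oos3hdl} is used in place of \eqref{eq.hxtle1n}; the right-hand side then simplifies, via the analogous Fibonacci--Lucas version of \eqref{eq.wo5odhd}, to $F_t L_{(s+k)(n+1)} - F_{t+(n+1)k}L_{s(n+1)}$.

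The remaining four identities \eqref{eq.t1gk197}--\eqref{eq.o9gxedr} follow by the same mechanism after replacing $k$ throughout by $t$ or $s$ in the governing recurrence (for instance, $\alpha^u = (-1)^t\alpha^{u-2t}+\sqrt 5\, F_t\,\alpha^{u-t}$ generates \eqref{eq.t1gk197} and its Lucas companion, with the shift $u = (s+t)(n+1)-k$). The main delicate point is the final identification: reconciling the single product $F_{k(n+1)}F_{s(n+1)-t}$ that naturally arises from the lemma with the bilinear form displayed in the statement requires careful sign bookkeeping via $F_{-m}=(-1)^{m-1}F_m$, $L_{-m}=(-1)^m L_m$, and the Fibonacci--Lucas variant of the d'Ocagne-type relation $F_A F_B - F_C F_D = (-1)^D F_{A-D}F_{B-D}$ (valid when $A+B=C+D$) embedded in \eqref{eq.wo5odhd}.
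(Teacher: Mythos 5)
Your proposal is correct, but it runs Lemma \ref{lem.u4bqbkc} on a different recurrence than the paper does. The paper takes identity \eqref{eq.snj8qge} itself, rewritten as $\alpha^{s+k}F_t=\alpha^s F_{t+k}+(-1)^{t-1}\alpha^{s-t}F_k$, and identifies $X_t=F_t$, $Y_t=(-1)^{t-1}\alpha^{s-t}$, $h=\alpha^{s+k}$, $f_1=\alpha^s$, $f_2=F_k$, $a=-k$, $b=0$; the lemma then yields $(-1)^{nk+t-1}F_k\sum_{j=0}^n(-1)^{kj}\alpha^{n(s-k)+s-t+2kj}=\alpha^{(s+k)(n+1)}F_t-\alpha^{s(n+1)}F_{t+(n+1)k}$, so a single application of \eqref{eq.hxtle1n} (resp.\ \eqref{eq.oos3hdl}) and property P1 produces the bilinear right-hand side of \eqref{eq.elbr7d1} (resp.\ \eqref{eq.zpc3vyq}) immediately, and the remaining four identities follow free of charge from the invariance of \eqref{eq.snj8qge} under $k\leftrightarrow t$ and $s\leftrightarrow k$. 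You instead feed the lemma the pure power recurrence $\alpha^u=(-1)^k\alpha^{u-2k}+\sqrt 5\,F_k\,\alpha^{u-k}$ with $X_u=Y_u=\alpha^u$, collapse the telescoped right side using $\alpha^{K}-\beta^{K}=\sqrt 5\,F_K$ at $K=k(n+1)$, and only afterwards recover the bilinear form through the multiplication formula \eqref{eq.wo5odhd} (and its Lucas companion \eqref{eq.tzz9m9p}) together with $F_{-m}=(-1)^{m-1}F_m$; I checked the shift $u=(s+k)(n+1)-t$ and the sign bookkeeping, and they are right, as is the analogous relabelling that gives \eqref{eq.t1gk197}--\eqref{eq.o9gxedr}. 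The paper's identification is shorter because its right-hand side needs no further massaging, while your route has the mild structural advantage of exhibiting each identity as a d'Ocagne-type rewriting of the single product $F_{k(n+1)}F_{s(n+1)-t}$ (or $F_{k(n+1)}L_{s(n+1)-t}$). One shared, harmless caveat: Lemma \ref{lem.u4bqbkc} requires $f_2\neq 0$, so the case $k=0$ (where $F_k=0$) should be noted separately in both arguments, but there both sides of \eqref{eq.elbr7d1} and \eqref{eq.zpc3vyq} vanish trivially.
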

\begin{proof}
Write identity \eqref{eq.snj8qge} as $\alpha ^{s + k} F_t  = \alpha ^s F_{t + k}  + ( - 1)^{t - 1} \alpha ^{s - t} F_k $ and identify $h=\alpha^{s+k}$, $f_1=\alpha^s$, $f_2=F_k$, $a=-k$, $b=0$, $X_t=F_t$ and $Y_t=(-1)^{t-1}\alpha^{s-t}$ in Lemma \ref{lem.u4bqbkc}. Application of identity \eqref{eq.hxtle1n} to the resulting summation identity yields identity \eqref{eq.elbr7d1}. Identity \eqref{eq.zpc3vyq} is obtained by multiplying the $\alpha-$sum by $\sqrt 5$ and using identity \eqref{eq.oos3hdl}. Identities \eqref{eq.t1gk197} -- \eqref{eq.o9gxedr} are obtained from identities \eqref{eq.elbr7d1} and \eqref{eq.zpc3vyq} by interchanging $k$ and $t$; and $s$ and $k$, in turn, since identity \eqref{eq.snj8qge} remains unchanged under these operations.

\end{proof}
\begin{lemma}
The following identities hold for integers $r$ and $n$ and arbitrary $x$ and $y$:
\begin{equation}\label{eq.bdoy5wn}
(x - y)\sum_{j = 0}^n {y^{r - j} x^j }  = y^{r - n} x^{n + 1}  - y^{r + 1} \,,
\end{equation}
\begin{equation}\label{eq.jes4tt5}
x\sum_{j = 0}^n {y^{r - j} (x + y)^j }  = y^{r - n} (x + y)^{n + 1}  - y^{r + 1}
\end{equation}
and
\begin{equation}\label{eq.k66s9zf}
(x - y)\sum_{j = 0}^n {x^{r - j} y^j }  = x^{r + 1} - x^{r - n} y^{n + 1} \,.
\end{equation}

\end{lemma}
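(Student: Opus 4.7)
The plan is to recognize all three identities as manifestations of the standard finite geometric series $\sum_{j=0}^{n}z^{j}=(z^{n+1}-1)/(z-1)$, and to obtain the second and third identities from the first by elementary substitutions.

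For identity \eqref{eq.bdoy5wn}, I would factor $y^{r}$ out of the sum on the left, writing
\[
\sum_{j=0}^{n}y^{r-j}x^{j}=y^{r}\sum_{j=0}^{n}(x/y)^{j},
\]
and apply the geometric-sum formula with $z=x/y$. Clearing the denominator $x-y$ then yields $y^{r-n}x^{n+1}-y^{r+1}$ on the right, which is exactly the claimed identity. The degenerate case $x=y$ is handled by direct inspection: both sides vanish, since $y^{r-n}x^{n+1}-y^{r+1}=y^{r+1}-y^{r+1}=0$ when $x=y$.

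For identity \eqref{eq.jes4tt5}, the key observation is that substituting $x\mapsto x+y$ (with $y$ unchanged) in identity \eqref{eq.bdoy5wn} turns the prefactor $x-y$ into $(x+y)-y=x$, the sum into $\sum_{j=0}^{n}y^{r-j}(x+y)^{j}$, and the right-hand side into $y^{r-n}(x+y)^{n+1}-y^{r+1}$, which is precisely identity \eqref{eq.jes4tt5}. Identity \eqref{eq.k66s9zf} follows from identity \eqref{eq.bdoy5wn} by interchanging $x$ and $y$: this gives $(y-x)\sum_{j=0}^{n}x^{r-j}y^{j}=x^{r-n}y^{n+1}-x^{r+1}$, and multiplying both sides by $-1$ produces the required form.

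I do not anticipate any real obstacle, since each of the three steps is a one-line manipulation. The only mild subtlety is that $r$ is allowed to be an arbitrary integer, so the terms $y^{r-j}$ (respectively $x^{r-j}$) implicitly require $y\ne 0$ (respectively $x\ne 0$); this is harmless for the intended applications in the sequel, where $x$ and $y$ will be non-vanishing powers of $\alpha$ times non-zero Fibonacci numbers.
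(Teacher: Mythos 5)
Your proof is correct and follows essentially the same route as the paper: identity \eqref{eq.bdoy5wn} comes from the finite geometric sum \eqref{eq.cmyl19p} with $x$ replaced by $x/y$ (after scaling by $y^{r}$ and clearing denominators), and the other two follow by the substitutions $x\mapsto x+y$ and $x\leftrightarrow y$. Your extra remarks on the $x=y$ case and on $y\ne 0$ for negative $r-j$ are sensible but do not change the argument.
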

\begin{proof}
Identity \eqref{eq.bdoy5wn} is obtained by replacing $x$ with $x/y$ in identity \eqref{eq.cmyl19p}. Identity \eqref{eq.jes4tt5} is obtained by replacing $x$ with $x+y$ in identity \eqref{eq.bdoy5wn}. Finally, identity \eqref{eq.k66s9zf} is obtained by interchanging $x$ and $y$ in identity \eqref{eq.bdoy5wn}.
\end{proof}
\begin{theorem}
The following identities hold for integers $r$, $n$, $s$, $k$ and $t$:
\begin{equation}\label{eq.wdxi1rj}
( - 1)^t F_k \sum_{j = 0}^n {F_{k + t}^j F_t^{r - j} F_{r(s + k) + s - t - kj} }  = F_t^{r - n} F_{k + t}^{n + 1} F_{s(r + 1) + k(r - n)}  - F_t^{r + 1} F_{(s + k)(r + 1)}\,,
\end{equation}
\begin{equation}\label{eq.tsf4k42}
( - 1)^t F_k \sum_{j = 0}^n {F_{k + t}^j F_t^{r - j} L_{r(s + k) + s - t - kj} }  = F_t^{r - n} F_{k + t}^{n + 1} L_{s(r + 1) + k(r - n)}  - F_t^{r + 1} L_{(s + k)(r + 1)}\,,
\end{equation}
\begin{equation}\label{eq.cnhy6p9}
\begin{split}
&( - 1)^{rt} F_t \sum_{j = 0}^n {( - 1)^{tj} F_k^{r - j} F_{k + t}^j F_{r(s - t) + s + k + tj} }\\
&\qquad= F_k^{r - n} F_{k + t}^{n + 1} F_{r(s - t) + tn + s}  - ( - 1)^{t(r + 1)} F_k^{r + 1} F_{(s - t)(r + 1)}\,,
\end{split}
\end{equation}
\begin{equation}\label{eq.wchqq81}
\begin{split}
&( - 1)^{rt} F_t \sum_{j = 0}^n {( - 1)^{tj} F_k^{r - j} F_{k + t}^j L_{r(s - t) + s + k + tj} }\\
&\qquad= F_k^{r - n} F_{k + t}^{n + 1} L_{r(s - t) + tn + s}  - ( - 1)^{t(r + 1)} F_k^{r + 1} L_{(s - t)(r + 1)}\,,
\end{split}
\end{equation}
\begin{equation}\label{eq.gyojmdp}
( - 1)^t F_k \sum_{j = 0}^n {F_{k + t}^{r - j} F_t^j F_{s(r + 1) + s - t + kj} }  = F_{k + t}^{r + 1} F_{s(r + 1)}  - F_{k + t}^{r - n} F_t^{n + 1} F_{s(r + 1) + k(n + 1)}
\end{equation}
and
\begin{equation}\label{eq.y23kxxt}
( - 1)^t F_k \sum_{j = 0}^n {F_{k + t}^{r - j} F_t^j L_{s(r + 1) + s - t + kj} }  = F_{k + t}^{r + 1} L_{s(r + 1)}  - F_{k + t}^{r - n} F_t^{n + 1} L_{s(r + 1) + k(n + 1)}\,.
\end{equation}

\end{theorem}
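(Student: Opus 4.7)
The plan is to follow the template established throughout the section: substitute appropriate algebraic expressions into the three identities of the preceding lemma, then convert the resulting $\alpha$-identity into a Fibonacci or Lucas identity by expanding each power of $\alpha$ via \eqref{eq.hxtle1n} or \eqref{eq.oos3hdl} and equating coefficients of $\alpha$ via property P1.

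The key substitution comes from identity \eqref{eq.snj8qge}: setting $x = \alpha^{s+k}F_t$ and $y = (-1)^t\alpha^{s-t}F_k$ gives the clean relation $x + y = \alpha^s F_{k+t}$. Thus $x$, $y$ and $x+y$ are each a power of $\alpha$ times one of the Fibonacci numbers $F_t$, $F_k$, $F_{k+t}$, and the substitution converts each lemma identity into a summation over products of these three Fibonacci numbers (with various powers) multiplied by a single power of $\alpha$. Concretely, identities \eqref{eq.cnhy6p9} and \eqref{eq.wchqq81} follow from lemma identity \eqref{eq.jes4tt5} with the substitution above as written; identities \eqref{eq.wdxi1rj} and \eqref{eq.tsf4k42} follow from \eqref{eq.jes4tt5} with the roles of $x$ and $y$ exchanged (taking $x = (-1)^t\alpha^{s-t}F_k$ and $y = \alpha^{s+k}F_t$), so that the outside factor becomes $(-1)^tF_k$ and the summand contains $F_t^{r-j}F_{k+t}^j$; and identities \eqref{eq.gyojmdp} and \eqref{eq.y23kxxt} follow from \eqref{eq.k66s9zf} with the reparametrization $x = \alpha^s F_{k+t}$, $y = \alpha^{s+k}F_t$, so that $x-y = (-1)^t\alpha^{s-t}F_k$. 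For each pair, the Fibonacci version is obtained by applying \eqref{eq.hxtle1n} to every occurrence of $\alpha^N$ and invoking P1, while the Lucas version is obtained by first multiplying the underlying $\alpha$-identity through by $\sqrt 5$, then applying \eqref{eq.oos3hdl} in place of \eqref{eq.hxtle1n} and equating coefficients of $\alpha$.

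The main obstacle is purely algebraic bookkeeping. The exponents produced by the substitutions, such as $(s-t)(r-j)+sj$ and $(s+k)(r-j)+sj$, must be simplified to arithmetic progressions in $j$ (yielding the subscripts $r(s-t)+s+k+tj$ and $r(s+k)+s-t-kj$ respectively), and sign factors of the form $(-1)^{t(r-j)}$ must be split as $(-1)^{tr}(-1)^{tj}$ with the constant parity absorbed into the overall coefficient. No new ideas beyond those already used in the earlier derivations of the paper are required; the derivation is a routine, if somewhat tedious, specialization of the lemma.
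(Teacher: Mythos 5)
Your proposal is correct and follows essentially the same route as the paper: substitute $x=\alpha^{s+k}F_t$, $y=(-1)^t\alpha^{s-t}F_k$ (so that $x+y=\alpha^sF_{k+t}$ by \eqref{eq.snj8qge}) into the lemma, then expand each power of $\alpha$ via \eqref{eq.hxtle1n} (after multiplying by $\sqrt 5$ and using \eqref{eq.oos3hdl} for the Lucas cases) and equate coefficients with property P1. The only cosmetic difference is that for \eqref{eq.wdxi1rj} and \eqref{eq.tsf4k42} you use \eqref{eq.jes4tt5} with the roles of $x$ and $y$ exchanged, whereas the paper uses \eqref{eq.bdoy5wn} with $x=\alpha^sF_{k+t}$, $y=\alpha^{s+k}F_t$; since \eqref{eq.jes4tt5} is just \eqref{eq.bdoy5wn} with $x$ replaced by $x+y$, the two substitutions produce the identical $\alpha$-identity.
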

\begin{proof}
Identities \eqref{eq.wdxi1rj} and \eqref{eq.tsf4k42} and identities \eqref{eq.gyojmdp} and \eqref{eq.y23kxxt} are obtained by setting $x=\alpha^{s}F_{k+t}$ and $y=\alpha^{s+k}F_t$ in identities \eqref{eq.bdoy5wn} and  \eqref{eq.k66s9zf} while taking note of identity \eqref{eq.snj8qge}. Identities \eqref{eq.cnhy6p9} and \eqref{eq.wchqq81} are derived by setting $x=\alpha^{s+k}F_t$ and $y=(-1)^t\alpha^{s-t}F_k$ in identity \eqref{eq.jes4tt5}.
\end{proof}
\begin{theorem}
The following identities hold for integers $p$, $q$ and $n$:
\begin{equation}
\begin{split}
\sum_{j = 0}^n {jF_{pj + q} } &= (n + 1)\frac{{F_p F_{p(n + 1) + q - 1}  - (F_{p - 1}  - 1)F_{p(n + 1) + q} }}{{L_p  - 1 + ( - 1)^{p - 1} }}\\
&\qquad  + \frac{{(F_{2p}  - 2F_p )(F_{p(n + 2) + q - 1}  - F_{p + q - 1} )}}{{(F_{2p - 1}  - 2F_{p - 1}  + 1)(F_{2p + 1}  - 2F_{p + 1}  + 1) - (F_{2p}  - 2F_p )^2 }}\\
&\quad\qquad - \frac{{(F_{2p - 1}  - 2F_{p - 1}  + 1)(F_{p(n + 2) + q}  - F_{p + q} )}}{{(F_{2p - 1}  - 2F_{p - 1}  + 1)(F_{2p + 1}  - 2F_{p + 1}  + 1) - (F_{2p}  - 2F_p )^2 }}
\end{split}
\end{equation}
and
\begin{equation}
\begin{split}
\sum_{j = 0}^n {jL_{pj + q} } &= (n + 1)\frac{{F_p L_{p(n + 1) + q - 1}  - (F_{p - 1}  - 1)L_{p(n + 1) + q} }}{{L_p  - 1 + ( - 1)^{p - 1} }}\\
&\qquad  + \frac{{(F_{2p}  - 2F_p )(L_{p(n + 2) + q - 1}  - L_{p + q - 1} )}}{{(F_{2p - 1}  - 2F_{p - 1}  + 1)(F_{2p + 1}  - 2F_{p + 1}  + 1) - (F_{2p}  - 2F_p )^2 }}\\
&\quad\qquad - \frac{{(F_{2p - 1}  - 2F_{p - 1}  + 1)(L_{p(n + 2) + q}  - L_{p + q} )}}{{(F_{2p - 1}  - 2F_{p - 1}  + 1)(F_{2p + 1}  - 2F_{p + 1}  + 1) - (F_{2p}  - 2F_p )^2 }}\,.
\end{split}
\end{equation}

\end{theorem}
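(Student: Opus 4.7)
The plan is to parallel the earlier derivations of $\sum_{j=0}^n F_{pj+q}$ and $\sum_{j=0}^n L_{pj+q}$, but starting from the weighted geometric sum
\[
\sum_{j=0}^n j x^j = \frac{x - x^{n+2}}{(1-x)^2} - \frac{(n+1)x^{n+1}}{1-x},
\]
which comes from applying $x\tfrac{d}{dx}$ to identity \eqref{eq.cmyl19p} and regrouping. Setting $x=\alpha^p$ and multiplying through by $\alpha^q$ gives
\[
\sum_{j=0}^n j\alpha^{pj+q} = \frac{\alpha^{p+q} - \alpha^{p(n+2)+q}}{(1-\alpha^p)^2} - \frac{(n+1)\alpha^{p(n+1)+q}}{1-\alpha^p};
\]
the two pieces on the right are treated separately, each reduced to the form $u\alpha + v$ with rational $u,v$, and the Fibonacci identity then read off via property P1.

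For the simple-denominator piece, identity \eqref{eq.hxtle1n} gives $1-\alpha^p = -F_p\alpha + (1-F_{p-1})$, whose product with its $\beta$-conjugate is $(1-\alpha^p)(1-\beta^p) = 1-L_p+(-1)^p = -M$, where $M:=L_p-1+(-1)^{p-1}$. Applying property P5 with $a=-(n+1)F_{p(n+1)+q}$, $b=-(n+1)F_{p(n+1)+q-1}$, $c=-F_p$, $d=1-F_{p-1}$ (so that the P5 denominator $c^2-d^2-cd$ is precisely $M$) and reading the coefficient of $\alpha$ delivers exactly the leading $(n+1)/M$ summand of the theorem.

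For the squared-denominator piece I would expand $(1-\alpha^p)^2$ using $\alpha^2=\alpha+1$ together with $F_pL_p=F_{2p}$ and $F_p^2+F_{p-1}^2=F_{2p-1}$, obtaining $(1-\alpha^p)^2 = A\alpha + B$ with $A := F_{2p}-2F_p$ and $B := F_{2p-1}-2F_{p-1}+1$. The companion quantity $B' := F_{2p+1}-2F_{p+1}+1$ then satisfies $B'=A+B$, so the P5 denominator $A^2-B^2-AB$ equals $-(BB'-A^2)$; moreover $(A\alpha+B)(A\beta+B) = ((1-\alpha^p)(1-\beta^p))^2 = M^2$, which yields the consistency $BB'-A^2=M^2$. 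Expanding $\alpha^{p+q}-\alpha^{p(n+2)+q}$ via \eqref{eq.hxtle1n}, applying P5 and equating $\alpha$-coefficients then produces the two remaining summands of the Fibonacci formula.

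The Lucas formula is obtained by running exactly the same argument after multiplying throughout by $\sqrt 5$ and using \eqref{eq.oos3hdl} in place of \eqref{eq.hxtle1n} on the numerators; the denominators $M$ and $BB'-A^2$ are unchanged because they are symmetric, $\alpha$-free combinations of $(1-\alpha^p)$ and $(1-\beta^p)$. The main obstacle is the bookkeeping for $(1-\alpha^p)^2 = A\alpha + B$: one has to track the Fibonacci-coefficient simplifications carefully in order to recognise the denominators in the statement as $M$ and $M^2$ written in Fibonacci-expanded form.
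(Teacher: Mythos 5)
Your proposal is correct and follows essentially the same route as the paper: differentiate the geometric sum \eqref{eq.cmyl19p}, set $x=\alpha^p$, multiply by $\alpha^q$ (and by $\sqrt 5$ for the Lucas case), then apply \eqref{eq.hxtle1n}, \eqref{eq.oos3hdl} and properties P5 and P1. Your explicit bookkeeping of $M$, $A$, $B$, $B'$ (including $B'=A+B$ and $BB'-A^2=M^2$) is accurate and simply fills in details the paper leaves implicit.
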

\begin{proof}
Differentiating identity \eqref{eq.cmyl19p} with respect to $x$ and multiplying through by $x$ gives
\begin{equation}\label{eq.zgqrx49}
\sum_{j = 0}^n {jx^j }  = (n + 1)\frac{{x^{n + 1} }}{{x - 1}} - \frac{{x^{n + 2}  - x}}{{(x - 1)^2 }}\,.
\end{equation}
Setting $x=\alpha^p$ and multiplying through by $\alpha^q$ produces
\begin{equation}\label{eq.tmxc3h5}
\sum_{j = 0}^n {j\alpha ^{pj + q} }  = \frac{{(n + 1)\alpha ^{p(n + 1) + q} }}{{\alpha ^p  - 1}} + \frac{{\alpha ^{p(n + 2) + q}  - \alpha ^{p + q} }}{{2\alpha ^p  - \alpha ^{2p}  - 1}}\,,
\end{equation}
from which the results follow through the use of identities \eqref{eq.hxtle1n} and \eqref{eq.oos3hdl} and properties P1 and P5.

\end{proof}
\section{Concluding remarks}
A Fibonacci-like sequence $(G_k)_{k\in\Z}$ is one whose initial terms $G_0$ and $G_1$ are given integers, not both zero, and
\begin{equation}
G_k=G_{k-1}+G_{k-2},\quad G_{-k}=G_{-k+2}-G_{-k+1}\,.
\end{equation}
Identities \eqref{eq.y9hcktl} and \eqref{eq.wa1x3n6} show that the sequences $(A_k)_{k\in\Z}$ and $(B_k)_{k\in\Z}$, where $A_k=\alpha^k$ and $B_k=\beta^k$ are Fibonacci-like, with respective initial terms $A_0=\alpha^0=1$, $A_1=\alpha$ and $B_0=\beta^0=1$, $B_1=\beta=-1/\alpha$. The identity
\begin{equation}
F_{s-t}G_{k+m}=F_{m-t}G_{k+s}+(-1)^{s+t+1}F_{m-s}G_{k+t}\,,
\end{equation}
can therefore be written for the golden ration and its inverse as
\begin{equation}\label{eq.n44ackt}
F_{s-t}\alpha^{k+m}=F_{m-t}\alpha^{k+s}+(-1)^{s+t+1}F_{m-s}\alpha^{k+t}
\end{equation}
and
\begin{equation}\label{eq.m7m7v0c}
F_{s-t}\beta^{k+m}=F_{m-t}\beta^{k+s}+(-1)^{s+t+1}F_{m-s}\beta^{k+t}\,.
\end{equation}
Proceeding as in previous calculations, many identities can be derived using identities \eqref{eq.n44ackt} and \eqref{eq.m7m7v0c}. For example, setting $x=F_{m-t}\alpha^{k+s}$ and $y=(-1)^{s+t}F_{m-s}\alpha^{k+t}$ in the binomial identity
\[
\sum_{j=0}^n{(-1)^j\binom njy^jx^{n-j} }=(x-y)^n
\]
and multiplying through by $\alpha^p$ produces
\[
\sum_{j = 0}^n {( - 1)^{(s + t + 1)j} \binom njF_{m - s}^j F_{m - t}^{n - j} \alpha ^{(k + t)j + (k + s)(n - j) + p} }  = F_{s - t}^n \alpha ^{(k + m)n + p}\,, 
\]
from which we find
\[
\sum_{j = 0}^n {( - 1)^{(s + t + 1)j} \binom njF_{m - s}^j F_{m - t}^{n - j} F_{(s + k)n + p + (t - s)j} }  = F_{s - t}^n F_{(k + m)n + p}  
\]
and
\[
\sum_{j = 0}^n {( - 1)^{(s + t + 1)j} \binom njF_{m - s}^j F_{m - t}^{n - j} L_{(s + k)n + p + (t - s)j} }  = F_{s - t}^n L_{(k + m)n + p} \,. 
\]

\hrule

\noindent 2010 {\it Mathematics Subject Classification}:
Primary 11B39; Secondary 11B37.

\noindent \emph{Keywords: }
Horadam sequence, Fibonacci number, Lucas number, Fibonacci-like number, Generating function, Golden ratio.

\hrule




\begin{thebibliography}{99}
\bibitem{adegoke18} Kunle Adegoke, Weighted sums of some second-order sequences, \emph{The Fibonacci Quarterly} {\bf 56}:3 (2018), 252--262.


\bibitem{freitag} H. T. Freitag, On summations and expansions of Fibonacci numbers , \emph{The Fibonacci Quarterly} {\bf 11}:1 (1973), 63--71.

\bibitem{griffiths} M. Griffiths, Extending the domains of definition of some Fibonacci identities, \emph{The Fibonacci Quarterly} {\bf 50}:4 (2012), 352--359.


\bibitem{hoggatt71} V. E. Hoggatt and  Jr. and  J. W. Phillips and H. T. Leonard and  Jr., Twenty-four master identities, \emph{The Fibonacci Quarterly} {\bf 9}:1 (1971), 1--17.

\bibitem{hoggatt71b} V. E. Hoggatt and  Jr., Some special Fibonacci and Lucas generating  functions, \emph{The Fibonacci Quarterly} {\bf 9}:2 (1971), 121--133.



\bibitem{jennings93} D. Jennings, Some polynomial identities for the Fibonacci and Lucas numbers, \emph{The Fibonacci Quarterly} {\bf 31}:2 (1993), 134--137.

\bibitem{koshy} T.~Koshy, \emph{Fibonacci and Lucas numbers with applications}, Wiley-Interscience, (2001).



\bibitem{vajda} S.~Vajda, \emph{Fibonacci and Lucas numbers, and the golden section: theory and applications}, Dover Press, (2008).


\end{thebibliography}
\end{document}